\documentclass[11pt]{article}

\usepackage{amsmath,amsthm,amssymb,color}
\usepackage{graphicx}
\usepackage{tikz-cd}
\usepackage{amscd}
\usepackage[normalem]{ulem}
\usepackage{xcolor}
\newcommand\nsandor{\bgroup\markoverwith{\textcolor{blue}{\rule[0.5ex]{2pt}{0.4pt}}}\ULon}

\textwidth=16cm \textheight=23cm
\parskip=\medskipamount
\topmargin=-1cm \oddsidemargin=0cm

\setlength\parindent{0pt}

\def\clift#1{#1^{\scriptscriptstyle{\mathrm{C}}}}

\def\vlift#1{#1^{\scriptscriptstyle{\mathrm{V}}}}

\def\fpd#1#2{{\displaystyle\frac{\partial #1}{\partial #2}}}
\def\spd#1#2#3{{\displaystyle\frac{\partial^2 #1}
{\partial #2\partial #3}}}

\def\clift#1{#1^{\scriptscriptstyle{\mathrm{C}}}}

\def\vlift#1{#1^{\scriptscriptstyle{\mathrm{V}}}}

\def\E{\mathcal{E}}
\def\R{\mathbb{R}}

\def\onehalf{{\textstyle\frac12}}

\def\la{{\mathfrak g}}

\def\vectorfields#1{\mathcal{X}(#1)}

\def\tr{\mathop{\mathrm{tr}}}
\def\Ad{\mathop{\mathrm{ad}}\nolimits}
\def\ad{\Ad}

\def\h{{\mathbf h}}
\def\F{{\mathbf F}}
\def\E{{\mathbf E}}
\def\BL{{\mathbf L}}
\def\H{{\mathbf H}}

\theoremstyle{plain}

\newtheorem{lemma}{Lemma}
\newtheorem{proposition}{Proposition}
\newtheorem{definition}{Definition}

\begin{document}

\title{Homogeneous nonlinear splittings and Finsler submersions}

\author{ S.\ Hajd\'u and T.\ Mestdag \\[2mm]
	{\small Department of Mathematics,  University of Antwerp,}\\
	{\small Middelheimlaan 1, 2020 Antwerpen, Belgium}
}

\date{}

\maketitle

\begin{abstract}
A nonlinear splitting on a fibre bundle is a generalization of an Ehresmann connection. An example is given by the  homogeneous nonlinear splitting  of a  Finsler function on the total manifold of a fibre bundle. We show how homogeneous nonlinear splittings and nonlinear lifts can be used to construct submersions between Euclidean, Minkowski and Finsler spaces. As an application we consider a semisimple Lie algebra and use our methods to give new examples of Finsler functions on a reductive homogeneous space.
\vspace{3mm}

\textbf{Keywords:} Homogeneous nonlinear splittings, nonlinear lifts, Minkowski and Finsler submersions,  bi-invariant Finsler functions on Lie groups, reductive homogeneous Finsler spaces.

\vspace{3mm}

\textbf{Mathematics Subject Classification:} 
53B40, 
53C60, 
53C05, 
53C30. 
\end{abstract}

\section{Introduction}

Structure preserving submersions between two manifolds play  a fundamental role in many parts of differential geometry, such as Lie group theory and Riemannian geometry. The main purposes for investigating such submersions can be summarized in two not unrelated manners: either to compare the geometry of the two manifolds, or to project a version of the geometric structure of one manifold onto the other. In particular, Riemannian submersions have been extensively studied since their inception (see e.g.\ the recent books \cite{falcitelli,sahin} for surveys, and the last chapter of \cite{falcitelli} for applications to physics). 

In \cite{Alv}, the underlying geometry of the manifolds is coming from  Finsler functions. The notion of a submersion between two Finsler manifolds extends that of a Riemannian submersion, and it can be used to construct new, interesting examples of Finsler manifolds from old ones. The notion of a Finsler submersion has received much attention in the last decades, see e.g.\
 \cite{paper1,libing,deng2,xu,xu2} for a non-exhaustive list of results and applications on Finsler submersions.

In our recent paper \cite{ourthirdarticle} we have introduced  the notion of a nonlinear splitting on a fibre bundle. This concept is a generalization of an Ehresmann connection on a fibre bundle, when one leaves a certain linearity requirement out of the definition. In accordance with the structure of the fibre bundle, we may speak of linear, affine, principal or homogeneous nonlinear splittings. Our main motivation for introducing nonlinear splittings comes from diverse topics in geometric mechanics, such as symmetry-group reduction of Lagrangian systems, mechanical systems with nonholonomic constraints, or Lagrangian systems that are exposed to  magnetic forces. The main goal of the current contribution is to show how homogeneous nonlinear splittings lie at the basis of many results concerning Finsler submersions.

In Section~\ref{secnonlinsplit} we recall the notion of a homogeneous nonlinear splitting on a fibre bundle and we discuss its basic properties. In particular, we show that a Finsler function on a fibre bundle defines a homogeneous nonlinear splitting and we investigate when the canonical spray of a Finsler manifold is tangent to the horizontal submanifold of the nonlinear splitting. In particular, we show that this tangency property is a projective invariant of the projective class of  sprays of a Finsler function.

Each tangent space of a Finsler manifold carries a so-called Minkowski norm.
In Section~\ref{minkowskinormssection} we consider Minkowski vector spaces,  instead of Finsler manifolds, and we recall the notion of a submersion between Minkowski normed spaces. This notion can be thought of as the infinitesimal version of a Finsler submersion. In Proposition~\ref{basicprop} we give two equivalent characterizations of this concept. This extends a result of \cite{Alv}, where only one direction was stated (without proof). As an example, we investigate how Euclidean submersions fit as examples of Minkowski submersions.

 In Section~\ref{nonlinearliftssection} we introduce the notion of a nonlinear lift between two vector spaces. Given a linear surjective map between two vector spaces, and given a Minkowski norm on the first vector space, we present in Proposition~\ref{subduced} the construction of a so-called subduced Minkowski norm on the second vector space. Our goal is to show that a nonlinear lift lies at the heart of this construction.
Each Minkowski space can be endowed with a family of Euclidean inner products. We show in Proposition~\ref{Euclprop} how they fit within the context of Minkowski submersions, and we focus, again, on their role of the associated nonlinear lifts. We indicate that the formulation of a similar result (in a less general context) of \cite{libing} was stated somewhat imprecise, and that the nonlinear lift can help to correct it.

In Section~\ref{Sec5} we recall from  \cite{Alv} the definition of a Finsler submersion. We show in Proposition~\ref{finslersplitting} how the nonlinear splitting, associated to a Finsler function on the total manifold on a fibre bundle can be employed in the construction of a subduced Finsler function on the base manifold. A sufficient condition for our construction is exactly that the canonical spray of the Finsler function is tangent to the horizontal submanifold of the nonlinear splitting (as we had investigated in Section~\ref{secnonlinsplit}). As an application we discuss reductive homogeneous Finsler spaces in Section~\ref{homspacesec}. We point out in Proposition~\ref{mainn} that
a Finsler function on the Lie group $G$, that is both left $G$- and right $H$-invariant, is sufficient to guarantee the existence of a nonlinear lift and of a subduced Finsler function on the homogeneous manifold $G/H$. 

Bi-invariant Finsler functions on Lie groups have been extensively investigated in the literature (see e.g.\ \cite{latifi,deng1}). However, explicit examples for such functions - that are non-Riemannian - are rarely calculated. Since bi-invariant functions satisfy the conditions of Proposition~\ref{mainn} we can use them (in Section~\ref{exsec}) to provide some examples of Finsler functions on homogeneous spaces. In particular, we consider $S^3$ as the homogeneous space $SO(4)/SO(3)$. 
Due to the one-to-one correspondence between Ad-invariant functions on a semisimple Lie algebra and its dual (Proposition~\ref{dual}), and thanks to an explicit calculation of such invariants in \cite{patera} for $SO(4)$, we can use our constructive method to work out examples of nolinear splittings and  Finsler functions on the  homogeneous  space $S^3$.

\section{Homogeneous nonlinear splittings} \label{secnonlinsplit}
Throughout this paper, we will work with a fibre bundle $\pi: M\rightarrow N$, and we will denote by $\tau:TM\rightarrow M$ and $\bar\tau:TN\rightarrow N$ the tangent bundles of $M$ and $N$, respectively. The natural projections of the pullback bundle $\pi^*TN$ of $\bar\tau$ by $\pi$, 
\[
\pi^*TN=\{(m,v_n)\in M\times TN~|~\pi(m)=\bar\tau(v_n)\},
\]
will be denoted by $p_1:\pi^*TN \to M$ and $p_2:\pi^*TN \to TN$.  

In the short exact sequence 	\[
\begin{tikzcd}
0\arrow{r} & V\pi\arrow{r}{} & TM\arrow{r}{(\tau,T\pi)} & \pi^*TN\arrow{r} & 0,
\end{tikzcd}
\]
$V\pi$ stands for the vertical bundle ${\rm Ker}\, T\pi$ of $\pi$ and $(\tau,T\pi)$
stands for the linear bundle map $TM \to \pi^*TN, (w_m) \mapsto (m, T\pi(w_m))$. From \cite{ourthirdarticle} we recall the  definition of a nonlinear splitting.

\begin{definition}
	A nonlinear splitting on $\pi:M\rightarrow N$
	is a map $h:\pi^*TN\rightarrow TM$ which is
	 smooth on the slit pullback bundle $\pi^*\mathring{T}N$,
	 fibre-preserving (i.e.\ $\tau \circ h = p_1$) and satisfies $T \pi \circ h = p_2$. The set   ${\rm Im}\,h\subset TM$ is the horizontal manifold of $h$.

A nonlinear splitting  is  homogeneous  if
	\[
	h(m,\lambda v_n)=	\lambda h(m,v_n), \qquad \forall \lambda \in \R^+,
	\]
that is, if $h$ is positive homogeneous of degree 1.
	\end{definition}

In this definition, $\mathring{T}N$ stands for the tangent manifold $TN$ from which the zero section has been removed.
Let $(x^i)$ be coordinates  on $N$ and $(q^a)=(x^i,y^{\alpha})$ coordinates on $M$ that are adjusted to the fibre bundle structure of $\pi$. We will denote the corresponding natural fibre coordinates on $TM$ by $(u^a)=(v^i,w^{\alpha})$. A local expression of a nonlinear splitting $h$ is then
\[
h(x^i,y^{\alpha},v^i)=(x^i,y^{\alpha},v^i,h^{\alpha}(x,y,v)).
\]
We call the functions $h^{\alpha}$  the coefficients of $h$. When the nonlinear splitting is homogeneous these coefficients $h^\alpha$ are $1^+$-homogeneous functions.
Euler's theorem (see e.g.\  \cite{Szilasi}) states that this is equivalent with the property
\begin{equation}\label{eulerstheorem}
	v^i\fpd{h^\alpha}{v^i} = h^\alpha.
\end{equation}
A nonlinear splitting is an {\em Ehresmann connection} on the fibre bundle $\pi: M\to N$ when its coefficients $h^\alpha$ are linear functions in $v^i$.

Our main example for a homogeneous nonlinear splitting comes from Finsler geometry. Let $M$ be a manifold.
A {\em second-order ordinary differential equation field} $S$ on $M$  is a vector field on $TM$ with the property that all its integral curves $\gamma(t)$ in $TM$ are lifted curves $\dot c(t)$ of curves $c(t)$ on $M$. Locally, $S$ can be expressed as
\[
S={u}^a\fpd{}{q^a}+f^a\fpd{}{u^a}.
\]
A second-order vector field $S$ is a {\em spray}, if it satisfies
\[
[\Delta,S]=S.
\]
Herein is  $\Delta=u^a\fpd{}{u^a}$ the Liouville vector field. In case of a spray, the coefficients $f^a$ are positive homogeneous functions of degree 2.

\begin{definition}
	A Finsler function $F:TM\rightarrow \R$ is a non-negative continuous function on the tangent bundle $TM$, which is
	  positive homogeneous,   smooth on the slit tangent bundle $\mathring{T}M$
	and strongly convex.
\end{definition} 
The function $E:=\frac{1}{2}F^2$ is called the energy of the Finsler function. In  local coordinates $(q^a,u^a)$ on $TM$, strong convexity means that the matrix
\[
\left(\frac{\partial^2 E}{\partial u^{a}\partial u^{b}}\right)
\]
is positive definite everywhere. If $\clift{X} = X^a \partial/\partial q^a + (\partial X^b/ \partial q^a) \partial/\partial u^a$ and $\vlift{X} = X^a\partial/\partial u^a$ stand for, respectively, the complete lift and the vertical lift of a vector field $X = X^a \partial/\partial q^a$ on $M$, then the relation
	\begin{equation}\label{ELsode}
	S  (\vlift{X}(E)) - \clift{X}(E) = 0,\qquad \forall X \in \vectorfields{M},
	\end{equation}
determines a unique spray  $S $ on $M$. This spray is called  {\em the canonical spray of the Finsler function $F$}. 

A spray $\tilde{S}$ is said to be {\em projectively related to $S$} if there exists a (1-homogeneous) function $P$ on $TM$ such that $\tilde S=S-2P\Delta$.
The geometric interpretation of this property is that $\tilde S$ and $S$ share the same base integral curves, when considered as point sets. The set of all sprays that are projectively related to each other is called a {\em projective class of sprays}. Properties of sprays, that are invariant under projective changes and therefore `geometric', are referred to as `projective invariants'.

The base integral curves of the canonical spray represent geodesics that are parametrized by arc length. In case of the canonical spray $S$ of a Finsler function $F$, there exists an easy characterization of its projective class of sprays (see e.g.\ \cite{CMS}): The class consists of all sprays $\tilde S$ that satisfy the property 
\[	\tilde S (\vlift{X}(F)) - \clift{X}(F) = 0,\qquad \forall X \in \vectorfields{M}.
\]

We now return to the case of a fibre bundle $\pi: M \to N$ and show that each Finsler function $F$ on $M$ generates a homogeneous nonlinear splitting. From the  strong convexity of $F$ we know  that the submatrix
\[
\left(\frac{\partial^2 E}{\partial w^{\alpha}\partial w^{\beta}}\right)
\]
is everywhere non-singular. For this reason, the implicit function theorem guarantees the local existence of a map $h:\pi^*TN\rightarrow TM$, as the solution of
\begin{equation}\label{definingrelation}
\frac{\partial E}{\partial w^{\alpha}}\circ h=0 \qquad\Leftrightarrow\qquad w^\alpha = h^\alpha(x,y,v).
\end{equation}
This defining relation of a nonlinear splitting  can also be written as 
\[
\vlift{Y}(E) \circ h =0, \qquad \forall \mbox{$\pi$-vertical vector field $Y$ on $M$}.
\]
\begin{definition} \label{induced} The  nonlinear splitting on $\pi: M \to N$, induced by the Finsler function $F$ on $M$, is the map $h:\pi^*TN\rightarrow TM$  determined by the relation (\ref{definingrelation}).
\end{definition}

\begin{proposition}\label{symmetryprop} Let $\pi: M \to N$ be a fibre bundle and $F$  a Finsler function on $M$.
	\begin{itemize} \item[(i)] The nonlinear splitting $h$ on $\pi$, induced by  $F$, is a homogeneous splitting. 
	
\item[(ii)]  The canonical  spray $S$ of $F$ is tangent to the horizontal manifold ${\rm Im}\,h$ if and only if
	\begin{equation}\label{symmetrycondition}
	\clift{Y}(E)\circ h=0,	
	\end{equation}
	for any vector field $Y$ on $M$ that is $\pi$-vertical.
	\item[(iii)] If the canonical spray $S$ is tangent to ${\rm Im}\,h$, then so are  all other sprays of its   projective class.
\end{itemize}
\end{proposition}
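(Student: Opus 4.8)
The unifying observation is that, by the defining relation (\ref{definingrelation}), the horizontal manifold ${\rm Im}\,h$ is exactly the subset of the slit part of $TM$ (in adjusted coordinates, where $v\neq 0$) cut out by the functions $\varphi_\alpha:=\fpd{E}{w^\alpha}=\vlift{Y_\alpha}(E)$, with $Y_\alpha=\partial/\partial y^\alpha$. Strong convexity makes the submatrix $\left(\spd{E}{w^\alpha}{w^\beta}\right)$ positive definite, so the differentials $d\varphi_\alpha$ are pointwise independent along ${\rm Im}\,h$; hence ${\rm Im}\,h$ is a genuine regular submanifold and $\{\varphi_\alpha=0\}$ is a clean defining system for it. I would also record two elementary homogeneity facts that do most of the work: since $F$ is $1^+$-homogeneous, $E=\onehalf F^2$ is $2^+$-homogeneous in $u^a=(v^i,w^\alpha)$, so each $\varphi_\alpha$ is $1^+$-homogeneous; and by Euler's theorem this is the same as $\Delta(\varphi_\alpha)=\varphi_\alpha$, where $\Delta=u^a\,\partial/\partial u^a$.

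For (i): for fixed $(x,y,v)$ with $v\neq 0$ the map $w\mapsto E(x,y,v,w)$ is smooth and strictly convex (its Hessian is the positive-definite submatrix above), hence has at most one critical point, namely $w=h(x,y,v)$. From the $1^+$-homogeneity of $\varphi_\alpha$ we get $\varphi_\alpha(x,y,\lambda v,\lambda h(x,y,v))=\lambda\,\varphi_\alpha(x,y,v,h(x,y,v))=0$ for $\lambda\in\R^+$, so $\lambda h(x,y,v)$ is the (unique) critical point of $w\mapsto E(x,y,\lambda v,w)$; therefore $h(x,y,\lambda v)=\lambda h(x,y,v)$, and $h$ is a homogeneous nonlinear splitting.

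For (ii): because $\{\varphi_\alpha=0\}$ is a regular defining system, a vector field $S$ on $TM$ is tangent to ${\rm Im}\,h$ iff $S(\varphi_\alpha)$ vanishes on ${\rm Im}\,h$, i.e.\ $S(\varphi_\alpha)\circ h=0$, for every $\alpha$. Applying the characterization (\ref{ELsode}) of the canonical spray to $X=Y_\alpha$ yields $S(\varphi_\alpha)=S(\vlift{Y_\alpha}(E))=\clift{Y_\alpha}(E)$, so $S$ is tangent to ${\rm Im}\,h$ iff $\clift{Y_\alpha}(E)\circ h=0$ for all $\alpha$. It then remains to pass from the coordinate fields $Y_\alpha$ to an arbitrary $\pi$-vertical $Y=Y^\alpha(x,y)\,\partial/\partial y^\alpha$: from the coordinate formula $\clift{Y}(E)=Y^\alpha\,\fpd{E}{y^\alpha}+u^b\,\fpd{Y^\alpha}{q^b}\,\fpd{E}{w^\alpha}$ the last sum vanishes on ${\rm Im}\,h$ (where $\fpd{E}{w^\alpha}=\varphi_\alpha=0$), while $\fpd{E}{y^\alpha}=\clift{Y_\alpha}(E)$; hence (\ref{symmetrycondition}) holding for all $\pi$-vertical $Y$ is equivalent to $\clift{Y_\alpha}(E)\circ h=0$ for all $\alpha$, which proves (ii).

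For (iii): any spray in the projective class of $S$ has the form $\tilde S=S-2P\Delta$ with $P$ a $1^+$-homogeneous function. Using $\Delta(\varphi_\alpha)=\varphi_\alpha$ from above, $\tilde S(\varphi_\alpha)=S(\varphi_\alpha)-2P\,\varphi_\alpha$; along ${\rm Im}\,h$ the first term vanishes by (ii) and the hypothesis, and the second because $\varphi_\alpha=0$ there, so $\tilde S(\varphi_\alpha)\circ h=0$ for all $\alpha$ and $\tilde S$ is tangent to ${\rm Im}\,h$ as well. I expect the only point needing care to be the submanifold bookkeeping in (ii): checking that strong convexity really makes $\{\varphi_\alpha=0\}$ a regular system, so that ``$S$ tangent to ${\rm Im}\,h$'' is equivalent to ``$S(\varphi_\alpha)$ vanishes on ${\rm Im}\,h$'', and that the intrinsic condition (\ref{symmetrycondition}) for all $\pi$-vertical $Y$ reduces to the finitely many coordinate conditions; parts (i) and (iii) are then short homogeneity arguments.
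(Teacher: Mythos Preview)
Your proposal is correct and follows essentially the same strategy as the paper: the defining system $\varphi_\alpha=\vlift{Y_\alpha}(E)=0$ for ${\rm Im}\,h$, together with homogeneity and the Euler--Lagrange identity (\ref{ELsode}), drives all three parts. The only cosmetic difference is in (iii): the paper verifies tangency of $\Delta$ via the alternative defining functions $w^\alpha-h^\alpha$ and Euler's relation (\ref{eulerstheorem}), whereas you use $\Delta(\varphi_\alpha)=\varphi_\alpha$ directly; both amount to the same observation that $\Delta$ is tangent to ${\rm Im}\,h$.
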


\begin{proof} 
(i)	The energy $E=\frac{1}{2}F^2$ of the Finsler function $F$ is a $2^+$-homogenous function on $TM$, and satisfies $\Delta(E)=2E$, where $\Delta$ is the Liouville vector field on $M$. Since, for each $X\in\vectorfields{M}$, $[\Delta,\vlift X] = -\vlift X$, it is easy to see that $\Delta(\vlift X(L)) = \vlift X(L)$, from which we may conclude that $\vlift{X}(L)$ is a $1^+$-homogenous function on $TM$.
As a consequence, for a $\pi$-vertical vector field $Y$ on $M$, both
\[ \vlift{Y}(E)(x,y,\lambda v,\lambda h(x,y,v)) =\lambda \vlift{Y}(E)(x,y,  v, h(x,y,v)) =0
\] and
\[ \vlift{Y}(E)(x,y,\lambda v, h(x,y,\lambda v))=0.
\]
	 From the uniqueness in the implicit function theorem, we conclude that $\lambda h^\alpha(x,y,v)=h^\alpha(x,y,\lambda v)$, which expresses the homogeneity of the splitting. 
	
(ii) The relation (\ref{ELsode}), when restricted to ${\rm Im}\,h$ and to $\pi$-vertical vector fields  gives
	 \[
	 S (\vlift{Y}(E))\circ h - \clift{Y}(E)\circ h= 0.
	 \]
	 Under the condition (\ref{symmetrycondition}), this  is equivalent to $S (\vlift{Y}(E))\circ h=0$, which expresses that $S$ is tangent to ${\rm Im}\,h$.

(iii) The difference between $S$ and $\tilde S$ is always a multiple of the Liouville vector field $\Delta$. This vector field,  $\Delta = v^i \partial/\partial {v^i}+ w^a \partial/\partial  {w^a}$,  is always tangent to the horizontal manifold $\mathcal H$ of a homogeneous nonlinear splitting, since \[\Delta(w^\alpha -h^\alpha) = w^\alpha - v^i \fpd{h^\alpha}{v^i} =0,
	\]
	everywhere where  $w^\alpha=h^\alpha$, in view of relation (\ref{eulerstheorem}).
\end{proof}

The property (iii) of Proposition~\ref{symmetryprop} shows that, when satisfied, condition (\ref{symmetrycondition}) is not specific to $S_E$ alone, but also valid for each of the sprays in its projective class.
In Section~\ref{Sec5} we will relate the tangency of $S$ to ${\rm Im}\,h$ to the concept of a Finsler submersion. But, first we need to revise some aspects of Euclidean and Minkowski submersions.

\section{Minkowski norms and Minkowski submersions}\label{minkowskinormssection}

In this section we first recall some preliminaries and results  on Minkowski norms that can be found in e.g.\ \cite{BCS2,Szilasi}.

Recall first  that a function $\F:V\rightarrow \R$ is said to be convex if 
	\[
	\F(tu +(1-t){\tilde u}) \leq t\F(u)+(1-t)\F({\tilde u}),
	\]
	  for all $u,\tilde u\in V$ and $t\in[0,1]$. $\F$ is strictly convex if strict inequality holds for all $u\neq \tilde u$.

\begin{definition}
	A Minkowski norm $\F$ on a vector space $V$ is a non-negative real-valued function with the following properties:
	\begin{itemize}
		\item $\F$ is regular: $\F$ is $C^{\infty}$ on $V\setminus \{0\}$,
		\item $\F$ is positive homogeneous: $\F(\lambda u)=\lambda \F(u)$ for all $u\in V$ and $\lambda >0$,
		\item $\F$ is strongly convex:  $\E := \onehalf \F^2$ is strictly convex, i.e.\  the matrix of functions $(g_{ab})$, with $g_{ab}	(u):= \spd{\E}{u^a}{u^b}(u)$, is positive definite for each $u \in V$.
	\end{itemize}
The pair $(V,\F)$ is called a Minkowski normed space.
\end{definition}

The restriction of a Finsler function $F$ to a specific tangent space gives a Minkowski norm ${\F}_m: T_mM \to \R, v_m \mapsto F(m,u_m)$. Likewise, a smoothly varying family of Minkowski norms $\F_m: T_mM \to\R$ can be glued together to give a Finsler function $F: TM \to \R, (m,u_m) \mapsto \F_m(u_m)$.

The following proposition summarizes some of the most important properties of a Minkowski norm. 

\begin{proposition}
	Let $\F$ be a Minkowski norm on a vector space $V$. Then
	\begin{itemize}
		\item $\F(u)>0$ for all $u\neq0$,
		\item $\F(u_1+u_2) \leq \F(u_1)  + \F(u_2)$ , where equality holds if and only if $u_1=\alpha u_2$ with $\alpha \geq 0$,
		\item $x^i\fpd{\F}{u^i}(u)\leq \F(x)$ for all $x\neq 0$, where equality holds if and only if $x=\alpha u$ with $\alpha \geq 0$.
	\end{itemize}
\end{proposition}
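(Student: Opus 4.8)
All three statements are the classical consequences of strong convexity, and the plan is to extract them from a single computation involving the fundamental tensor $g_{ab}(u)=\spd{\E}{u^a}{u^b}(u)$ together with Euler's theorem. Throughout I abbreviate $\F_a:=\partial\F/\partial u^a$.

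First I would record the homogeneity identities. Since $\F$ is positively $1$-homogeneous and smooth off the origin, $\E=\onehalf\F^2$ is positively $2$-homogeneous, so Euler's theorem gives $u^a\fpd{\E}{u^a}=2\E=\F^2$. Differentiating this once in $u^b$ yields $g_{ab}(u)u^a=\fpd{\E}{u^b}=\F\,\F_b$, and contracting again with $u^b$ (using $u^a\F_a=\F$) gives $g_{ab}(u)u^au^b=\F(u)^2$; positive definiteness of $(g_{ab}(u))$ for $u\neq0$ then forces $\F(u)^2>0$, which is the first bullet. Differentiating $\fpd{\E}{u^b}=\F\,\F_b$ once more produces the key relation $\F\,\partial_a\partial_b\F=g_{ab}-\F_a\F_b$, which will be the workhorse.

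Next I would prove that $\F$ is convex. From $g_{ab}(u)u^a=\F(u)\F_b(u)$ one gets $\xi^a\F_a(u)=\F(u)^{-1}\langle u,\xi\rangle_{g(u)}$ for any vector $\xi$, where $\langle\cdot,\cdot\rangle_{g(u)}$ is the genuine inner product determined by $g(u)$; by Cauchy--Schwarz, $\big(\xi^a\F_a(u)\big)^2\le\F(u)^{-2}\langle u,u\rangle_{g(u)}\langle\xi,\xi\rangle_{g(u)}=g_{ab}(u)\xi^a\xi^b$, with equality exactly when $\xi\in\R u$. Feeding this into the workhorse relation gives $\F(u)\,\partial_a\partial_b\F(u)\,\xi^a\xi^b=g_{ab}(u)\xi^a\xi^b-\big(\xi^a\F_a(u)\big)^2\ge0$, so since $\F(u)>0$ the Hessian $(\partial_a\partial_b\F(u))$ is positive semidefinite with kernel exactly $\R u$. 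Hence $\F$ is convex along every segment missing the origin, and along a line through the origin it is, by homogeneity, piecewise linear with right slope $\F(\cdot)\ge0\ge-\F(-\cdot)$, hence convex there as well; so $\F$ is convex on all of $V$. Combined with positive homogeneity this gives $\F(u_1+u_2)=2\F\big(\onehalf u_1+\onehalf u_2\big)\le\F(u_1)+\F(u_2)$, the inequality of the second bullet. For the third bullet, fix $u\neq0$, $x\neq0$ and set $\rho(t)=\F\big((1-t)u+tx\big)$: it is convex, $\rho(0)=\F(u)$, $\rho(1)=\F(x)$, and $\rho'(0)=x^a\F_a(u)-\F(u)$ by Euler, so convexity gives $\F(x)=\rho(1)\ge\rho(0)+\rho'(0)=x^a\F_a(u)$.

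Finally the equality cases. In both, equality forces the relevant convex restriction of $\F$ — the function $\sigma(t)=\F((1-t)u_1+tu_2)$ in the second bullet, and $\rho$ in the third — to coincide with its chord/tangent line, hence to be affine on $[0,1]$. Using the first bullet in the form $\F(v)\neq-\F(-v)$ for $v\neq0$ one checks that this segment cannot pass through the origin, so the restriction is smooth on $[0,1]$ and its vanishing second derivative forces the direction vector of the segment into the kernel of the Hessian at every point; letting $t\to0^+$ then shows the two vectors are parallel, and one last sign check (again via $\F>0$) upgrades this to $u_1=\alpha u_2$, resp.\ $x=\alpha u$, with $\alpha\ge0$, the converse implications being immediate from homogeneity. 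I expect the only genuinely non-formal step to be the convexity of $\F$ in the previous paragraph: strong convexity of $\E$ alone does not make $\sqrt{2\E}$ convex, and what makes the Cauchy--Schwarz estimate close is precisely the homogeneity identity $g_{ab}u^a=\F\,\F_b$. The equality analysis is elementary but needs care because $\F$ is only positively (not absolutely) homogeneous, so the wrong-sign parallel subcases must be discarded using $\F>0$; the condition in the second bullet should be read symmetrically (either $u_1=\alpha u_2$ or $u_2=\alpha u_1$), the cases with $u_1=0$ or $u_2=0$ being trivial.
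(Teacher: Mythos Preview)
Your proof is correct, but there is nothing to compare it to: the paper does not prove this proposition. It is stated as a summary of well-known facts, with the remark that these are the \textit{positivity}, \textit{triangle inequality} and \textit{fundamental inequality}, and the reader is referred to \cite{BCS2,Szilasi} for details. Your argument---deriving the homogeneity identity $g_{ab}(u)u^a=\F\,\F_b$, using it together with Cauchy--Schwarz for $g_u$ to show that the Hessian of $\F$ is positive semidefinite with kernel $\R u$, and then reading off the three statements and their equality cases---is exactly the standard route taken in those references (see in particular \cite[Section~1.2]{BCS2}). Your closing remark that the equality clause in the triangle inequality should be read symmetrically (allowing either $u_1=\alpha u_2$ or $u_2=\alpha u_1$, to cover the case where one of the vectors vanishes) is a fair observation about how the statement is phrased.
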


These properties are usually referred to as \textit{positivity, triangle inequality} and \textit{the fundamental inequality}, respectively. An immediate consequence of the triangle inequality and the homogeneity is that any Minkowski norm $\F$ is a convex function. Moreover, the equality
\[
\F(t u + (1-t){\tilde u}) = t\F(u)+(1-t)\F({\tilde u})
\]
holds true if and only if $u=\alpha {\tilde u}$ with $\alpha \geq 0$ (see \cite{BCS2}).

In what follows, we will also need a converse statement, which is summarized in the following lemma.
 \begin{lemma}\label{strict}
	Let $\F$ be a regular non-negative real-valued function on a vector space $V$ that is moreover
	\begin{itemize}
	\item positive homogeneous: $\F(\lambda u)=\lambda \F(u)$ for all $u\in V$ and $\lambda >0$,
	\item convex: $\F(tu +(1-t){\tilde u}) \leq t\F(u)+(1-t)\F({\tilde u})$
	\item  strictly convex for all $u\neq\tilde{u}$, unless $u=\alpha\tilde{u}$ for a non-negative constant $\alpha$.
\end{itemize}
Then, $\F$ is a Minkowski norm. \end{lemma}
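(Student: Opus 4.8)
The plan is to verify the three clauses in the definition of a Minkowski norm. Two of them, regularity and positive homogeneity, are assumed, so the whole content is to prove strong convexity, i.e.\ that the matrix $(g_{ab}(u))$ with $g_{ab}=\spd{\E}{u^a}{u^b}$ and $\E=\onehalf\F^2$ is positive definite for every $u\neq 0$. A direct differentiation gives, at any point where $\F$ is smooth and for any vector $\xi$,
\[
g_{ab}(u)\,\xi^a\xi^b=\Big(\fpd{\F}{u^a}(u)\,\xi^a\Big)^{2}+\F(u)\,\spd{\F}{u^a}{u^b}(u)\,\xi^a\xi^b ,
\]
so the argument splits into (a) showing $\F(u)>0$ for $u\neq0$, so that this identity holds everywhere off the origin and the second term carries a positive weight, and (b) showing the right-hand side cannot vanish for $\xi\neq0$.

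For (a): the zero set $Z:=\F^{-1}(0)$ is a convex cone --- it is a cone by homogeneity, and it coincides with the sublevel set $\{\F\le0\}$ of the convex function $\F$, hence is convex. It contains no line: if $u_0$ and $-u_0$ both lay in $Z$ with $u_0\neq0$, the strict-convexity hypothesis applied to the non-(positively-)proportional pair $u_0,-u_0$ at $t=\onehalf$ would give $\F(0)<\onehalf\F(u_0)+\onehalf\F(-u_0)=0$, contradicting $\F\ge0$ and $\F(0)=0$. One then has to upgrade ``line-free'' to ``trivial'': if $u_0\in Z\setminus\{0\}$, pick $w$ linearly independent from $u_0$ and apply strict convexity on the segments $[u_0,w]$ --- rewritten via homogeneity as $\F(w+su_0)<\F(w)$ for all $s>0$ --- to pin down the structure of $Z$ and reach a contradiction with the regularity of $\F$ along the ray $\R^{+}u_0$. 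I expect this upgrade to be the most delicate point of the proof.

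Once $\F>0$ off the origin, $\E$ is smooth and positive on $V\setminus\{0\}$ and the above identity holds there. Convexity of $\F$ makes the Hessian $(\spd{\F}{u^a}{u^b}(u))$ positive semidefinite, so $g_{ab}(u)\xi^a\xi^b\ge0$; equality forces $\fpd{\F}{u^a}(u)\xi^a=0$ and $\xi\in\ker\big(\spd{\F}{u^a}{u^b}(u)\big)$. Euler's theorem for the $1^{+}$-homogeneous $\F$ shows the radial vector $u$ always lies in that kernel, and the case $\xi\in\R u$ is settled by $g_{ab}(u)u^au^b=2\E(u)=\F(u)^2>0$. So it remains to exclude a null vector $\xi$ of $g_{ab}(u)$ transverse to $u$; equivalently, to show $\ker\big(\spd{\F}{u^a}{u^b}(u)\big)=\R u$. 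Restricting $\F$ to the plane spanned by $u$ and such a $\xi$, and writing $\F=r\rho(\theta)$ in polar coordinates on that plane, the non-radial eigenvalue of the Hessian of $\F$ is $(\rho+\rho'')/r$; a transverse null vector at $u$ means $(\rho+\rho'')(\theta_u)=0$, and combining this with Euler's relations and the perspective description $\F(au+b\xi)=a\,\F\big(u+\tfrac ba\xi\big)$ one locates a segment on which $\F$ is affine, contradicting the strict-convexity hypothesis.

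The main obstacle throughout is that the strict-convexity hypothesis is a \emph{chordwise} (global) statement, whereas positivity of $\F$, positive definiteness of $(g_{ab})$, and triviality of $\ker\mathrm{Hess}\,\F$ are \emph{pointwise} statements; bridging these gaps is exactly where positive homogeneity must be exploited --- through Euler's identities, restriction to $2$-planes through the origin, and the perspective/cone structure of $\F$ --- and, for the positivity step, through the sharp form ``equality in the convexity inequality holds only along a common ray from the origin''.
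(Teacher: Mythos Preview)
The paper's route is entirely different and much more elementary: it never touches a Hessian. Squaring the assumed inequality $\F(tu+(1-t)\tilde u)\le t\F(u)+(1-t)\F(\tilde u)$ (valid since $\F\ge 0$) and rearranging gives
\[
\F^2\big(tu+(1-t)\tilde u\big)\;\le\; t\,\F^2(u)+(1-t)\,\F^2(\tilde u)\;-\;t(1-t)\big(\F(u)-\F(\tilde u)\big)^2,
\]
so $\E=\onehalf\F^2$ is convex. If equality holds in the convexity inequality for $\E$, the extra term forces $\F(u)=\F(\tilde u)$, and then $\F$ is constant on the segment $[u,\tilde u]$; the ray-strict hypothesis yields $u=\alpha\tilde u$, and $\F(u)=\F(\tilde u)$ together with homogeneity gives $\alpha=1$. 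That is the entire argument: the paper proves \emph{functional} strict convexity of $\E$ and identifies this (via the ``i.e.'' in its definition) with strong convexity.

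Your plan aims instead at pointwise positive definiteness of $(g_{ab})$, and step~(b) cannot be closed. From $(\rho+\rho'')(\theta_u)=0$ you try to locate a segment on which $\F$ is affine, but a pointwise zero of the second derivative does not produce one: a strictly convex curve can have isolated points of vanishing curvature. Concretely, $\F(x,y)=(x^4+y^4)^{1/4}$ on $\R^2$ satisfies every hypothesis of the lemma --- it is the $\ell^4$-norm, smooth off $0$, non-negative, positively $1$-homogeneous, and strictly convex except along rays --- yet $\E=\onehalf(x^4+y^4)^{1/2}$ has $\spd{\E}{y}{y}(1,0)=0$, so $(g_{ab})(1,0)$ is only semidefinite. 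Thus the Hessian reading of strong convexity is strictly stronger than what the hypotheses deliver, and your step~(b) is attempting to prove something that fails in this example. Your positivity step~(a) is, as you anticipate, delicate, but it is not the real obstruction; the paper's inequality argument sidesteps it entirely.
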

\begin{proof}
We only need to prove that $\F$ is  strongly convex everywhere (i.e.\ that $\E=\frac{1}{2}\F^2$ is strictly convex everywhere).
	First we show that $\F^2$ is convex. From the non-negativity and convexity of $\F$, it follows that for any $u,\tilde{u}\in V$ with $u\neq \tilde{u}$
	\[
	\left( \F(tu +(1-t){\tilde u})\right)^2 \leq \left(t\F(u)+(1-t)\F({\tilde u})\right)^2.
	\]
	After reordering the terms one gets the inequality
	\begin{equation}\label{reorderedconvex}
	\F^2(tu +(1-t){\tilde u}) \leq t\F^2(u)+(1-t)\F^2({\tilde u})-t(1-t)\left(\F(u)-\F(\tilde{u})\right)^2.
	\end{equation}

	Since the last term of the right-hand side is non-positive, the convexity of $\F^2$ follows. 
	
	To prove that $\F^2$ is strictly  convex everywhere, we need to show that equality only holds if $u=\tilde u$. In (\ref{reorderedconvex}) equality is satisfied only if the last term of its right-hand side is zero.  In that case, $\F(u)=\F(\tilde{u})$, and from the equality it also follows that 
	\[
	\F^2\left( (1-t)u+t\tilde{u}\right)=\F^2(u).
	\]
Therefore, $\F$ itself must be constant along the line $tu+(1-t)\tilde{u}$. But, $\F$ was assumed to be strictly convex unless $u=\alpha\tilde{u}$ with some non-negative $\alpha$. From $\F(u)=\F(\tilde{u})$ we see  (from the positive homogeneity of $\F$) that $\alpha=1$, and thus $u = \tilde u$. 
\end{proof}

A connected and open subset $D\subset V$ is said to be strictly convex if it contains the interior of any line segment joining any two points of the topological closure $\overline{D}$.

For a Minkowski vector space $(V,\F)$  we  define 
\[
B_\F= \big\{ u\in V ~ : ~ \F(u)<1 \big\},
\]
\[
S_\F=\big\{ u\in V ~ : ~ \F(u)=1 \big\},
\]
and we call $B_\F$, $\overline{B_\F}$ and $S_\F$ the open unit ball, closed unit ball and unit sphere  of the Minkowski normed space. In \cite{BCS2}, the authors mention that the open ball is a strictly convex domain of $V$. Furthermore, the closed ball is a compact set with boundary $S_\F$.

We recall from  \cite{Alv} the definition of a submersion between Minkowski spaces.

\begin{definition} \label{defMinsub}
	Let $(V_1,\F_1)$ and $(V_2,\F_2)$ be two Minkowski normed spaces.
	A surjective  linear map $\mu :V_1\rightarrow V_2$ is a submersion between Minkowski normed spaces if
	 \[\mu(\overline{B_{\F_1}})=\overline{B_{\F_2}}.
	\]
\end{definition}

We will also speak of a `Minkowski submersion', in short.
For later reference, we first prove a technical lemma.
\begin{lemma}\label{open}
	Let $V$ and $W$ be topological spaces, and $U$ an open subset of  $V$  with compact closure $\overline U$. Then, for any continuous map $\mu:V\rightarrow W$,
	\begin{equation}
		\mu (\overline{U})=\overline{\mu(U)}.
	\end{equation}
\end{lemma}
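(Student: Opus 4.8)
The plan is to prove the double inclusion $\mu(\overline U)\subseteq\overline{\mu(U)}$ and $\overline{\mu(U)}\subseteq\mu(\overline U)$ separately, using only continuity of $\mu$ and compactness of $\overline U$.

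\textbf{First inclusion.} For $\mu(\overline U)\subseteq\overline{\mu(U)}$, I would use the standard topological fact that for any continuous map $\mu$ and any subset $A\subseteq V$ one has $\mu(\overline A)\subseteq\overline{\mu(A)}$. Applied to $A=U$ this gives the inclusion directly, with no need for compactness. (If one prefers a self-contained argument: take $x\in\overline U$, pick a net $x_\lambda\in U$ converging to $x$; then $\mu(x_\lambda)\to\mu(x)$ by continuity, so $\mu(x)\in\overline{\mu(U)}$.)

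\textbf{Second inclusion.} For $\overline{\mu(U)}\subseteq\mu(\overline U)$, the point is that $\mu(\overline U)$ is a \emph{closed} set containing $\mu(U)$, hence it contains $\overline{\mu(U)}$. The set $\mu(\overline U)$ is closed because $\overline U$ is compact (given) and $\mu$ is continuous, so $\mu(\overline U)$ is a compact subset of $W$; and a compact subset of a topological space is closed \emph{provided} $W$ is Hausdorff. This is the one place where compactness of $\overline U$ is genuinely used, and it is the main potential obstacle: the statement as written does not assume $W$ is Hausdorff, yet in full generality $\mu(\overline U)$ need not be closed. I would therefore either (a) add the hypothesis that $W$ is Hausdorff, which is harmless since in all intended applications $W$ is a finite-dimensional vector space, or (b) note that the lemma is invoked only in such settings. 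Assuming $W$ Hausdorff, the chain $\mu(U)\subseteq\mu(\overline U)$ with $\mu(\overline U)$ closed yields $\overline{\mu(U)}\subseteq\overline{\mu(\overline U)}=\mu(\overline U)$.

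\textbf{Conclusion.} Combining the two inclusions gives $\mu(\overline U)=\overline{\mu(U)}$. In summary, the argument is: continuity gives one inclusion automatically; compactness of $\overline U$ together with the Hausdorff property of $W$ makes $\mu(\overline U)$ closed, which gives the reverse inclusion. The only subtlety worth flagging in the write-up is the implicit Hausdorff assumption on $W$.
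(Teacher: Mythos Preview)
Your proof is correct and follows essentially the same route as the paper's: one inclusion from continuity ($\mu(\overline A)\subseteq\overline{\mu(A)}$), the other from the fact that $\mu(\overline U)$ is compact and hence closed. You are in fact more careful than the paper on both points: the paper uses sequences rather than nets for the first inclusion, and it also tacitly assumes ``compact $\Rightarrow$ closed'' (writing ``$\mu(\overline U)$ is compact, and therefore it equals its closure'') without flagging the Hausdorff hypothesis you correctly identify.
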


\begin{proof} Since $U\subset \overline U$, we have $\mu(U)\subset \mu(\overline U)$ and thus $\overline{\mu(U)}\subset \overline{\mu(\overline U)}$.
	Since any continuous image of a compact set is compact, $\mu (\overline{U})$ is compact, and therefore it equals its closure, so  $\mu (\overline{U})=\overline{\mu (\overline{U})}$. We conclude that $\overline{\mu(U)}\subset \mu (\overline{U})$.

	To show that also $\mu (\overline{U})\subset\overline{\mu(U)} $, we pick an element $\mu(x)$ of $\mu(\overline{U})$. Since $x\in\overline{U}$  there exists a sequence, say $x_n$ in $U$ that converges to $x$. Since $\mu$ is continuous, $x_n\rightarrow x$ implies $\mu(x_n)\rightarrow \mu(x)$. The sequence $\mu(x_n)$ is located in $\mu(U)$ by construction and therefore $\mu(x)$ is an accumulation point of $\overline{\mu(U)}$. Whence, $\mu(x)\in \overline{\mu(U)}$.
\end{proof}

The following proposition shows the relation between the two Minkowski norms, if a submersion  exists between them.

\begin{proposition}\label{basicprop}
	Let $\mu$ be a surjective linear map between the Minkowski normed spaces $(V_1,\F_1)$ and $(V_2,\F_2)$. Then the following statements are equivalent:
\begin{itemize}
\item[(i)] $\mu$ is a Minkowski submersion.
\item[(ii)] $\mu(\overline{B_{\F_1}})=\overline{B_{\F_2}}.$
\item[(iii)] $\F_2(v)=\inf \big\{ \F_1(u)~:~\mu(u)=v \big\}.$
\end{itemize}
	\end{proposition}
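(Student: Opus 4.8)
The plan is to prove the cycle of implications $(i)\Leftrightarrow(ii)$ trivially, and then $(ii)\Rightarrow(iii)$ and $(iii)\Rightarrow(ii)$.

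First note that $(i)$ and $(ii)$ are literally the same statement, since Definition~\ref{defMinsub} of a Minkowski submersion is precisely the equality $\mu(\overline{B_{\F_1}})=\overline{B_{\F_2}}$; so there is nothing to prove there, and the real content is the equivalence of this with the infimum formula $(iii)$.

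For $(ii)\Rightarrow(iii)$, fix $v\in V_2$ and set $m(v):=\inf\{\F_1(u):\mu(u)=v\}$. The infimum is over a non-empty set (since $\mu$ is surjective) and is attained: the set $\mu^{-1}(v)$ is a closed affine subspace, $\F_1$ is continuous and coercive (it is a Minkowski norm, so $\F_1(u)>0$ for $u\neq0$ and $\F_1$ is homogeneous, hence $\F_1\to\infty$ along any ray), so the infimum is a minimum, realized at some $u_0$ with $\mu(u_0)=v$. Write $c:=m(v)=\F_1(u_0)$. I want to show $\F_2(v)=c$. By homogeneity it suffices to treat $v\neq 0$ (the case $v=0$ forces $c=0=\F_2(0)$). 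Now scale: $\F_1(u_0/c)=1$, so $u_0/c\in S_{\F_1}\subset\overline{B_{\F_1}}$, hence $v/c=\mu(u_0/c)\in\mu(\overline{B_{\F_1}})=\overline{B_{\F_2}}$, which gives $\F_2(v/c)\le 1$, i.e.\ $\F_2(v)\le c$. For the reverse inequality, suppose $\F_2(v)<c$; then $v/\F_2(v)$... more carefully: pick any $u$ with $\mu(u)=v$; I claim $\F_1(u)\ge \F_2(v)$, which would force $m(v)\ge\F_2(v)$. If $\F_1(u)=0$ then $u=0$, so $v=0$, excluded; otherwise $u/\F_1(u)\in\overline{B_{\F_1}}$, so $v/\F_1(u)=\mu(u/\F_1(u))\in\overline{B_{\F_2}}$, giving $\F_2(v)\le\F_1(u)$. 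Taking the infimum over such $u$ yields $\F_2(v)\le m(v)=c$, and combined with the displayed reverse we get $\F_2(v)=c$, proving $(iii)$. (In fact only $\mu(\overline{B_{\F_1}})\subseteq\overline{B_{\F_2}}$ was used for $\F_2(v)\le c$ and only the surjectivity plus attainment for the infimum side; the opposite inclusion is what makes the two match.)

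For $(iii)\Rightarrow(ii)$, I argue the two inclusions. First, $\mu(\overline{B_{\F_1}})\subseteq\overline{B_{\F_2}}$: if $u\in\overline{B_{\F_1}}$ then $\F_1(u)\le1$, and since $v:=\mu(u)$ satisfies $\F_2(v)=\inf\{\F_1(u'):\mu(u')=v\}\le\F_1(u)\le1$, we get $v\in\overline{B_{\F_2}}$. Conversely, for $v\in\overline{B_{\F_2}}$, i.e.\ $\F_2(v)\le1$: by $(iii)$ and the attainment argument above there is $u_0$ with $\mu(u_0)=v$ and $\F_1(u_0)=\F_2(v)\le1$, so $u_0\in\overline{B_{\F_1}}$ and $v\in\mu(\overline{B_{\F_1}})$. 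Hence $\mu(\overline{B_{\F_1}})=\overline{B_{\F_2}}$, which is $(ii)$, and then $(i)$ by definition. I expect the main subtlety to be the justification that the infimum in $(iii)$ is actually attained (needed both to extract the minimizer $u_0$ in the $(iii)\Rightarrow(ii)$ direction and to argue cleanly in $(ii)\Rightarrow(iii)$); this rests on coercivity of $\F_1$ on the affine fibre $\mu^{-1}(v)$, which follows from positivity and $1$-homogeneity of a Minkowski norm. One may alternatively package the closure manipulations through Lemma~\ref{open} applied to $U=B_{\F_1}$, whose closure $\overline{B_{\F_1}}$ is compact, giving $\mu(\overline{B_{\F_1}})=\overline{\mu(B_{\F_1})}$ and streamlining the set-theoretic bookkeeping; I would mention this as the reason Lemma~\ref{open} was proved just beforehand.
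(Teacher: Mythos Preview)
Your overall structure is right and most pieces are in place, but there is a genuine slip in the $(ii)\Rightarrow(iii)$ direction: both of your sub-arguments establish the \emph{same} inequality $\F_2(v)\le m(v)$, and the reverse inequality $m(v)\le\F_2(v)$ is never proved. Concretely, your first scaling argument (using $u_0/c\in\overline{B_{\F_1}}$) gives $\F_2(v)\le c$; then you announce ``For the reverse inequality'' but what follows---showing $\F_1(u)\ge\F_2(v)$ for every $u\in\mu^{-1}(v)$, hence $m(v)\ge\F_2(v)$---is again $c\ge\F_2(v)$. Your closing parenthetical even hints that ``the opposite inclusion is what makes the two match'', yet that inclusion is never actually invoked. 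The missing half uses $\overline{B_{\F_2}}\subseteq\mu(\overline{B_{\F_1}})$: for $v\ne 0$ set $d:=\F_2(v)$; then $v/d\in\overline{B_{\F_2}}$, so by $(ii)$ there is $u$ with $\mu(u)=v/d$ and $\F_1(u)\le1$, whence $\mu(du)=v$ and $\F_1(du)\le d$, giving $m(v)\le d=\F_2(v)$. You in fact began exactly this line (``suppose $\F_2(v)<c$; then $v/\F_2(v)\ldots$'') before abandoning it for the wrong one.

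For $(iii)\Rightarrow(ii)$ your argument is correct. It differs slightly from the paper's: the paper first proves the open-ball inclusion $B_{\F_2}\subset\mu(B_{\F_1})$ (which avoids any appeal to attainment of the infimum) and then passes to closures via Lemma~\ref{open}, whereas you invoke attainment directly on the closed balls. Your coercivity justification for attainment is fine (a Minkowski norm is equivalent to a Euclidean norm on a finite-dimensional space, so $\F_1$ is proper on the closed affine fibre $\mu^{-1}(v)$), so either route works; the paper's use of Lemma~\ref{open} simply trades that analytic fact for a topological one.
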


\begin{proof}   The equivalence between the statements (i) and (ii) is exactly Definition~\ref{defMinsub}. 

(ii)$\implies$(iii)~~  Let $u\in V_1$ be a nonzero vector. Since $\frac{u}{\F_1(u)} \in \overline{B_{\F_1}}$, it follows  from relation  (ii) that 
\[
	\F_2\left(\mu\left(\frac{u}{\F_1(u)}\right)\right) \leq 1.
\]
	 Using the linearity of $\mu$ and the positive homogeneity of $\F_2$, this leads to the inequality
\begin{equation}\label{ineq}
	\F_2(\mu(u))\leq \F_1(u)
\end{equation}
for all nonzero $u\in V_1$. Since  (\ref{ineq}) also holds true for $u=0$ trivially, it is valid for all $u\in V_1$.

Let now $v$ be a unit vector of $V_2$. Then, $v$ belongs to the closed unit ball of $V_2$ and therefore, by the property (ii), there exists an element  $u_0\in V_1$ with $\F_1(u_0)\leq1$ such that $\mu(u_0)=v$. As a result $u_0$ is an element of the set $\big\{ \mu^{-1}(v)\big\}$ and therefore
\begin{equation*}
\inf \big\{ \F_1(u)~:~\mu(u)=v  \big\} \leq 1.
\end{equation*}
On the other hand, (\ref{ineq}) implies that $\F_2(v)\leq \F_1(u)$, for all $u\in\mu^{-1}(v)$. Given that $v$ was a unit vector, we conclude that also
\begin{equation}\label{infimumisboundedfrombelow}
1 \leq \inf \big\{ \F_1(u)~:~\mu(u)=v \big\}.
\end{equation}
Notice that the norm of $u_0$ can not be strictly less than $1$, as it would contradict (\ref{infimumisboundedfrombelow}). As a result, the infimum of the set is included in the set, so it is actually a minimum, reached at $u_0\in V_1$, and the relation (iii) is therefore confirmed for unit vectors $v$. Multiplying the equation with an arbitrary positive constant and making use of the homogeneity of the Minkowski norms and the linearity of $\mu$, we conclude that (iii) is true for any element $v\in V_2$.

(iii)$\implies$(ii)~~ We  show first that $ \overline{B_{\F_2}}\subset\mu(\overline{B_{\F_1}}) $. Let $v \in B_{\F_2}$. Then, by assumption, 
\begin{equation*}
	\inf \big\{ \F_1(u)~:~\mu(u)=v \big\} ~=~ \F_2(v)<1.
\end{equation*}
It follows that there exists an element, say $u_0\in V_1$, for which $\F_1(u_0)<1$ and $\mu (u_0)=v$. Therefore, the open unit ball of $V_2$ is a subset of $\mu(B_{\F_1})$ and its closure, $\overline{B_{\F_2}}$, is  a subset of $\overline{\mu(B_{\F_1})}$. By Lemma~\ref{open} this set is the same as $\mu(\overline{B_{\F_1}})$.
 
Finally, we show that $\mu(\overline{B_{\F_1}}) \subset  \overline{B_{\F_2}}$. Since by the property (iii),  
\[
	\F_2(\mu(u))\leq \F_1(u), \qquad \forall u\in V_1,
\]
we see that if $v=\mu(u_0)$ for a  $u_0\in \overline{B_{\F_1}}$, also $\F_2(v)\leq 1$. This means that indeed $v\in \overline{B_{\F_2}}$.
\end{proof}

{\bf Example.} Let $\mu:V_1\rightarrow V_2$ be a surjective linear map between the Euclidean vector spaces $(V_1,\langle,\rangle_1)$ and $(V_2,\langle,\rangle_2)$.  The {\em horizontal subspace $\mathcal H$ of $\langle , \rangle_1$} is  the subspace of $V_1$ that is orthogonal to the kernel of $\mu$, that is: if $u\in {\mathcal H}$, then
$	\langle w, u\rangle_1=0$	for all $w \in {\rm Ker}\,\mu$. It is important to notice that a Euclidean vector space $(V_1,\langle,\rangle_1)$ is also a Minkowski normed space with the induced Minkowski norm $\F_1(u) :=\sqrt{\langle u,u\rangle_1}$.

The map $\mu$ is {\em a submersion between  Euclidean vector spaces} if
	\[
	\langle u,\tilde u \rangle_1 =\langle \mu(u),\mu(\tilde u)\rangle_2,
	\]
 for all $u,\tilde u\in {\mathcal H}$. 
In other words, the definition says that $\mu$ is an isometry between the horizontal subspace ${\mathcal H}$ of $V_1$ and the vector space $V_2$.

Proposition~\ref{basicprop} can be used to see that a Euclidean submersion $\mu: (V_1,\langle,\rangle_1) \to (V_2,\langle,\rangle_2)$ is a special case of a Minkowskian submersion w.r.t.\ the two induced norms, $\mu: (V_1,\F_1) \to (V_2,\F_2)$.  Denote by $u^{hor}$ and $u^{ver}$ the horizontal and vertical part of a $u\in V_1$. Since $\langle u^{hor},u^{ver} \rangle_1=0$, the square of the norm of $u$ equals
\[
(\F_1(u))^2 = (\F_1(u^{hor}))^2 + (\F_1(u^{ver}))^2.
\]
On the other hand, when we write the definition of a Euclidean submersion in terms of the induced norm, we see (after taking a square root) that $\F_1(u^{hor}) =\F_2(\mu(u^{hor}))= \F_2(\mu(u))$. We conclude that $\F_1(u) \geq  \F_2(\mu(u))$. As a consequence, for any $v\in V_2$, 
\[
\F_2(v)=\min \{\F_1(\mu^{-1}(v))\}.
\]
Proposition~\ref{basicprop} now shows that $\mu$ is a submersion of Minkowski spaces.

\section{Nonlinear lifts} \label{nonlinearliftssection}

Let $\mu: V_1 \to V_2$ be a surjective linear map between two vector spaces $V_1$ and $V_2$. The next definition can be seen as the special case of a nonlinear splitting, when the manifolds are in fact vector spaces.
\begin{definition} \label{defnonlinearlift}
A nonlinear lift is a continuous map $\h: V_2\to V_1$ which is smooth on $V_2\setminus \{0\}$ and which is such that $\mu\circ \h = {\rm id}$.

A nonlinear lift is homogeneous when $\h(\lambda v) = \lambda \h(v)$, for all $\lambda>0$ and $v\in V_2$. The set ${\rm Im}\,\h$ is then called the horizontal cone of the lift.
\end{definition}

Since $\h$ has a left inverse, it is injective. This definition is reminiscent to the one of a nonlinear splitting in Section~\ref{secnonlinsplit}. It is clear that when $h: \pi^*TN \to TM$ is a nonlinear splitting, then, for each $m\in M$ (with $\pi(m)=n)$, the map ${\h}_m:T_{n}N \to T_{m}M, v_n \mapsto h(m,v_n)$ is a nonlinear lift. On the other hand, a smoothly varying family of nonlinear lifts  ${\h}_m:T_{\pi(m)}N \to T_{m}M$ constitute together the nonlinear splitting $h: \pi^*TN \to TM, (m,v_n) \mapsto \h_m(v_n)$.

We now come back to the Minkowski norms of Proposition~\ref{basicprop}.
In practice, although the surjective linear map $\mu$ is often given beforehand,  the norm $\F_2$ on $V_2$ is not. The next proposition shows how to construct  a Minkowski norm on $V_2$ from $\F_1$.

\begin{proposition} \label{subduced}
	Let $(V_1,\F_1)$ be a Minkowski space, $V_2$ a vector   space, and $\mu : V_1\rightarrow V_2$ a linear and surjective map. Then,
	
	\begin{enumerate} \item[(i)] there exists an induced homogeneous nonlinear lift $\h: V_2 \to V_1$,
	
\item[(ii)]	there  exists a unique Minkowski norm $\F_2:V_2\rightarrow\R$ on $V_2$, given by
\begin{equation}\label{def}
	\F_2(v):=\inf \big\{ \F_1(u)~:~\mu(u)=v \big\},
\end{equation}
referred to as {\em the subduced norm}, for which   $\mu$ is a submersion between the Minkowski spaces,
\item[(iii)] the nonlinear lift $\h$ and the Minkowski norms $\F_1$ and $\F_2$ are related by 
\begin{equation} \label{eleven}
\F_1(\h(v))=\F_2(v).
\end{equation}
\end{enumerate}
\end{proposition}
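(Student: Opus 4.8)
The plan is to construct the homogeneous nonlinear lift $\h$ first, by assigning to each $v\in V_2$ the $\F_1$-shortest vector of the affine subspace $\mu^{-1}(v)$, and then to read off $\F_2$ and all three assertions from it. That this shortest vector exists and is unique is the first thing to check. Fixing an auxiliary norm $\|\cdot\|$ on $V_1$, compactness of the $\|\cdot\|$-unit sphere and the positivity $\F_1>0$ off the origin give constants $0<c\le C$ with $c\|u\|\le\F_1(u)\le C\|u\|$ for all $u\in V_1$; hence the sublevel sets of $\F_1$ are bounded, their intersections with the closed set $\mu^{-1}(v)$ are compact, and $\F_1$ attains its infimum on $\mu^{-1}(v)$. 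If $u_1\ne u_2$ in $\mu^{-1}(v)$ both realised this minimum $m$, then $\tfrac12(u_1+u_2)\in\mu^{-1}(v)$ and the triangle inequality forces $\F_1\big(\tfrac12(u_1+u_2)\big)=m$; the equality case of the triangle inequality recalled above then gives $u_1=\alpha u_2$ with $\alpha\ge0$, and $\F_1(u_1)=\F_1(u_2)$ with $1$-homogeneity gives $\alpha=1$, a contradiction. (For $v=0$ the unique minimiser is $0$.) Let $\h(v)$ be this minimiser; then $\mu\circ\h=\mathrm{id}$ by construction, and since $\mu^{-1}(\lambda v)=\lambda\,\mu^{-1}(v)$ and $\F_1$ is $1$-homogeneous, $\h(\lambda v)=\lambda\h(v)$ for $\lambda>0$.

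For the regularity required in (i) I would pick a linear section $s:V_2\to V_1$ of $\mu$, write $\mu^{-1}(v)=s(v)+\ker\mu$, and characterise $\h(v)=s(v)+w(v)$ with $w(v)\in\ker\mu$ by the first-order condition that the differential of $\E_1:=\tfrac12\F_1^2$ at $s(v)+w(v)$ annihilate $\ker\mu$ — the vector-space analogue of the defining relation (\ref{definingrelation}). Since $\h(v)\ne0$ for $v\ne0$, the map $(v,w)\mapsto \mathrm d\E_1\big(s(v)+w\big)\big|_{\ker\mu}$ is smooth near $(v,w(v))$, and its derivative in $w$ there is the Hessian of $\E_1$ at $\h(v)$ restricted to $\ker\mu\times\ker\mu$, which is positive definite, hence invertible, by strong convexity of $\F_1$. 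The implicit function theorem then yields a smooth local solution, necessarily equal to $w$ by the uniqueness above, so $\h$ is smooth on $V_2\setminus\{0\}$; continuity at $0$ follows from homogeneity, as $\h$ is bounded on an auxiliary unit sphere of $V_2$. This proves (i).

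The remaining assertions follow quickly. Since the infimum in (\ref{def}) is attained exactly at $\h(v)$, we have $\F_2(v)=\F_1(\h(v))$, which is (iii); and $\F_2=\F_1\circ\h$ immediately makes $\F_2$ non-negative, positively $1$-homogeneous, zero only at the origin, and $C^\infty$ on $V_2\setminus\{0\}$. Strong convexity I would get from Lemma~\ref{strict}: $\F_2$ is convex because for $u_i\in\mu^{-1}(v_i)$ the point $tu_1+(1-t)u_2$ lies in $\mu^{-1}(tv_1+(1-t)v_2)$, so convexity of $\F_1$ gives $\F_2\big(tv_1+(1-t)v_2\big)\le t\F_1(u_1)+(1-t)\F_1(u_2)$, and one takes infima; and if equality held in the convexity inequality for some $v_1$ that is not a non-negative multiple of $v_2$, then choosing $u_i=\h(v_i)$ and chaining the inequalities forces $\F_1\big(tu_1+(1-t)u_2\big)=t\F_1(u_1)+(1-t)\F_1(u_2)$, hence $u_1=\alpha u_2$ with $\alpha\ge0$ and so $v_1=\alpha v_2$, a contradiction — so Lemma~\ref{strict} applies and $\F_2$ is a Minkowski norm. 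That $\mu$ is a submersion for $\F_2$ is the implication (iii)$\Rightarrow$(i) of Proposition~\ref{basicprop}, and uniqueness of such a norm is the implication (i)$\Rightarrow$(iii). The part I expect to be most delicate is the implicit-function-theorem step for the smoothness of $\h$ — specifically, that the Hessian hypothesis is available precisely because $\h(v)\ne0$ away from $v=0$ — with the equality-case argument upgrading convexity of $\F_2$ to strong convexity a secondary point.
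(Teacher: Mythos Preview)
Your proof is correct and follows the same overall architecture as the paper's: construct $\h$ as the unique $\F_1$-minimiser on each fibre, obtain its smoothness from the implicit function theorem applied to the first-order condition $d\E_1|_{\ker\mu}=0$ (the paper's relation (\ref{minn2})), read off $\F_2=\F_1\circ\h$, and verify strong convexity via Lemma~\ref{strict} through the same chain of inequalities (linearity of $\mu$, the bound $\F_2\circ\mu\le\F_1$, the equality case of the triangle inequality for $\F_1$). The one substantive difference is in how existence of the minimiser is obtained: you invoke equivalence of $\F_1$ with an auxiliary norm to get compact sublevel sets directly, whereas the paper argues geometrically that $\mu$ carries $\overline{B_{\F_1}}$ onto the convex hull of $\F_2^{-1}(1)$ (using Lemma~\ref{open}) and then extracts a preimage on the unit sphere. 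Your route is shorter and self-contained; the paper's route has the side benefit of making the ball-preservation property of Definition~\ref{defMinsub} visible already at this stage. One small remark on your phrasing: when you say the local IFT solution is ``necessarily equal to $w$ by the uniqueness above'', what you really use is that $\E_1$ restricted to the fibre has positive-definite Hessian (hence a unique critical point), not merely the uniqueness of the minimiser; the conclusion is correct, but the justification is the strong convexity of $\E_1$ rather than the earlier triangle-inequality argument.
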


\begin{proof}
Let us first define $\F_2:V_2\rightarrow\R$ by means of the relation (\ref{def}).
Since the value of $\F_2$ is defined as the infimum of a set which consists only of non-negative numbers, $\F_2$ itself must be non-negative. It is also clear that $\F_2$ is positive homogeneous of degree 1, since 
\begin{eqnarray*}
\F_2(\lambda v)&=&\inf \big\{ \F_1(u)~:~\mu(u)=\lambda v \big\} = \inf \big\{ \F_1(\lambda \tilde u)~:~\mu(\lambda \tilde u)=\lambda v \big\}\\ &=&\inf \big\{ \lambda \F_1( \tilde u)~:~\mu( \tilde u)= v \big\}= \lambda \F_2(v)\qquad\mbox{for all}\quad\lambda>0.
\end{eqnarray*}
We will show in the second part of the proof that $\F_2$ is also strongly convex and regular, and is therefore a Minkowski norm. Proposition~\ref{basicprop} then  guarantees that $\mu$ is a submersion between the Minkowski normed spaces, and it also guarantees that the norm $\F_2$ on $V_2$ is unique. We start the proof, however, with the construction of an associated homogeneous nonlinear lift.

(i) Let $v$ be an arbitrary nonzero element of $V_2$. Since $\mu^{-1}(v)$ does not contain the zero vector of $V_1$, it follows that $\F_2(v)$ is positive.(Indeed, if $\F_2(v)$ was zero, the zero vector of $V_1$ would be an element or an accumulation point of the set $\mu^{-1}(v)$. But this set is closed and therefore includes its accumulation points. We arrive at a contradiction with the fact that $\mu^{-1}(v)$ does not contain the zero vector of $V_1$, therefore it follows that $\F_2(v)$ is indeed positive.) By homogeneity, we conclude that the set $\F_2^{-1}(1)$ is non-empty. Let us denote the convex hull of it by $\overline{B_{\F_2}}$ and its interior by $B_{\F_2}$. In a similar fashion as in the proof of Proposition~\ref{basicprop}, we can conclude from the definition of $\F_2$ that $B_{\F_2}\subset\mu(B_{\F_1})$ and  from the inequality
$
		\F_2(\mu(u))\leq \F_1(u),
$
that  $\mu(B_{\F_1})\subset B_{\F_2}$. For these reasons, $
	\mu(B_{\F_1})=B_{\F_2}
$, and therefore also $
	\mu(\overline{B_{\F_1}})=\overline{B_{\F_2}}
$. As a result, $\mu$ is surjective from $\overline{B_{\F_1}}$ to $\overline{B_{\F_2}}$ and therefore, for any $v\in V_2$ with $\F_2(v)=1$, there exists a vector $u_0\in V_1$ with $\F_1(u_0)=1$ and $\mu (u_0)=v$. When restricted to a fibre $\mu^{-1}(v)$, the function $\F_1$ is strictly convex. Since at $u_0$ this function reaches its minimum, the minimum $u_0$ must be unique.

We conclude that the infimum in the definition of $\F_2$ is a minimum. In what follows we define the nonlinear lift $\h: V_2 \to V_1$ as the map $v \mapsto \h(v)=u_0$, that is: $\h(v)$ is the unique vector in $\mu^{-1}(v)$ for which the equality
\[
\F_1(\h(v))=\F_2(v)
\]
is valid. (This then corresponds to the expression (\ref{eleven}) in statement (iii).)

It is clear from the above equation that, for all $\lambda >0$, 
\[
\F_1(\h(\lambda v))=\F_2(\lambda v) = \lambda \F_2(v)=\lambda \F_1(\h(v)).
\]
By the uniqueness of the nonlinear lift we may conclude that the map $h$ is positive homogeneous of degree 1.

We show next that this nonlinear lift is smooth outside of the origin. For this reason, fix a basis $\{E_i,E_\alpha\}$ of $V_1$ such that the vectors $\{E_\alpha\}$ span $\ker \mu$ and such that the vectors $\{\mu(E_i)\}$ form a basis of $V_2$. We will use the notation $\{u^a\}=\{v^i,w^\alpha\}$ for the coordinates on $V_1$, with respect to this basis. Since the restriction of $\F_1$ to $\mu^{-1}(v)$ reaches its unique minimum at $\h(v)$, we know that
\begin{equation}\label{minn}
\frac{\partial \F_1}{\partial w^{\alpha}}(\h(v))=0.
\end{equation}
From this, also \begin{equation} \label{minn2}
\frac{\partial \E_1}{\partial w^{\alpha}}(\h(v))=0.
\end{equation}  Since  the Hessian of $\E_1=\frac{1}{2}\F_1^2$ is positive definite, so is the submatrix \[
\left(\frac{\partial^2 \E_1}{\partial w^{\alpha}\partial w^{\beta}}\right).
\] By means of the implicit function   theorem we know that there will be a unique smooth map $\h$ (in an open neighborhood of $w$) that  satisfies relation (\ref{minn2}) and therefore also (\ref{minn}). Since the nonlinear lift is unique, it must coincide with this smooth map.

(ii) Since the nonlinear lift $\h$ and the Minkowski norm $\F_1$ are both smooth outside of the origin, we conclude that $\F_2$ is regular. In order to show that $\F_2$ is a Minkowski norm, we will rely on Lemma~\ref{strict}. 
Let $v$ and $\tilde v$ be elements of $V_2$ that are not positive scalar multiples of each other, and let $t\in ]0,1[$. Then, 
\begin{eqnarray*}\F_2\Big(tv+(1-t){\tilde v}\Big)&=&\F_2\Big(\mu \big(\h(t v)\big)+\mu\big(\h((1-t){\tilde v})\big)\Big)= \F_2\Big(\mu \big(\h(t v)+\h((1-t){\tilde v})\big)\Big)\\ &\leq& \F_1\Big(\h(t v)+\h((1-t){\tilde v})\Big) \\ &<& \F_1\Big(\h(t v)\Big)+\F_1\Big(\h((1-t) {\tilde v})\Big)\\&=&t\F_1\Big(\h(v)\Big)+(1-t)\F_1\Big(\h({\tilde v})\Big)=t\F_2(v)+(1-t)\F_2 ({\tilde v}).
\end{eqnarray*}
In the first line we have used $\mu\circ \h = {\rm id}$ and the linearity of $\mu$. The second line relies on the inequality $\F_2(\mu(u))\leq \F_1(u)$. In the third line we have used the triangle inequality. It is a strict inequality because $\h(t v)$ and $\h((1-t) {\tilde v})$ are not positive scalar multiples of each other. If that were the case, $\h(t v) = \beta \h((1-t) {\tilde v})$, then from the injectivity and the homogeneity of $\h$,  $t v  = \beta  (1-t) {\tilde v}$, that is: $v$ and $\tilde v$ are  positive scalar multiples of each other. In the last line we have used the homogeneity of $\F_1$ and $\h$, and finally the defining relation (\ref{eleven}) of $\h$.
The conclusion is that $\F_2$ satisfies the conditions of Lemma~\ref{strict} and that it is a Minkowski norm. \end{proof}

{\bf Example 1.} Consider the following Randers-type Minkowski norm on $\R^2$
\begin{equation*}
	\F_1(v,w)=\sqrt{2v^2+2vw+2w^2}+v+w, \qquad (v,w)\in \R^2
\end{equation*}
and let $\mu$ be the (surjective and linear) projection to the $v$-axis, $\mu(v,w)=v$. Then, we know from Proposition~\ref{subduced} that $\mu$ and $\F_1$ determine a nonlinear lift $\h: \R \to \R^2$ and a subduced Minkowski norm $\F_2$ on $\R$. We calculate them explicitly.

The circle, centered at the point $(-1,-1)$ with radius $r=\sqrt{3}$ constitutes the set of unit-length vectors for $\F_1$.  We may find an explicit solution for $\h$ by solving
\[
\fpd{\F_1}{w}(\h(v)) = 0.
\]
An explicit calculation easily reveals that the couple $\h(v) \in \R^2$ should lie in the half plane $v+2w\leq0$  and that 
\[
\h(v)= \begin{cases} 
(v,\frac{1}{1+\sqrt{3}}v), &\, v\leq 0, \\
(v,\frac{1}{1-\sqrt{3}}v), &\, v>0.
\end{cases}
\]
The horizontal cone ${\rm Im}\,\h$ consists therefore of the two horizontal unit vectors $u_1=(-1+\sqrt{3},-1)$ and $u_2=(-1-\sqrt{3},-1)$ of $\F_1$, and of their nonnegative scalar multiples. The figure below shows the half plane $v+2w\leq 0$ (below the dotted line $v+2w=0$), the horizontal unit vectors $u_1$ and $u_2$ and the unit sphere of $\F_1$. 
\begin{center}
	\begin{tabular}{c}
		\begin{minipage}{11cm}
			\begin{center}
				\includegraphics[height=5.8cm]{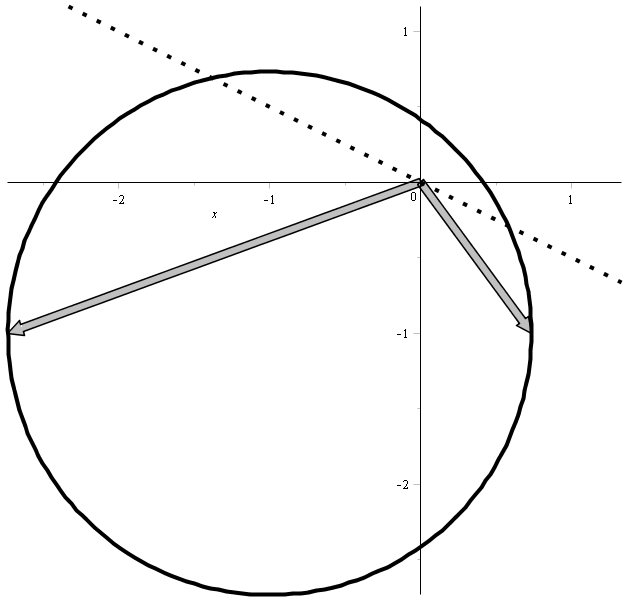}
			\end{center}			
		\end{minipage}	
	\end{tabular}
\end{center}
The subduced metric  $\F_2=\min_{u\in \mu^{-1}(v)}\{\F_1(u)\}
$ on $\R$ can be explicitly given by
\[\F_2(v)= \begin{cases} 
\frac{1}{-1-\sqrt{3}}v, &\, v\leq 0, \\
\frac{1}{-1+\sqrt{3}}v, &\, v>0.
\end{cases}
\]
It
admits two unit-length vectors, $\mu_1(u_1)=-1-\sqrt{3}$ and  $\mu_1(u_2)=-1+\sqrt{3}$. Their convex hull is therefore the unit sphere of $(\R,\F_2)$.

{\bf Example 2.} Consider again the case of a Euclidean submersion $\mu:(V_1,\langle,\rangle_1) \to (V_2,\langle,\rangle_2)$. In the previous section we have seen that we may interpret it also as a Minkowski submersion, with $\F_1(u):=\sqrt{\langle u,u\rangle_1}$ and $\F_2(v):=\sqrt{\langle v,v\rangle_2}$. As such, it defines a corresponding nonlinear lift $\h: V_2\to V_1$.  From the proof of Proposition~\ref{subduced} we know that  $\h(v)\in \pi^{-1}(v)$ satisfies $\F_1(\h(v))=\F_2(v)$. When expressed in terms of the Euclidean norms, we get  
\[
\langle \h(v), \h(v) \rangle_1= \langle v, v \rangle_2. 
\]
If we set  $\h(v) = (\h(v))^{hor}+ (\h(v))^{ver}$, the previous relation leads to $\langle (\h(v))^{ver}, (\h(v))^{ver} \rangle_1=0$, and thus $\h(v)^{ver}=0$, or $\h(v)\in {\mathcal H}_1$. 
The nonlinear lift of $v\in V_2$ is therefore, as expected, the unique horizontal element $u\in {\mathcal H}$ with $\mu(u)=v$. It so happens here that the nonlinear lift is a linear lift.

We may also conclude that the Minkowski norm $\F_2$ on $V_2$ that is induced by the inner product $\langle,\rangle_2$ is the same as the subduced Minkowski norm of $\F_1$ that is guaranteed by Proposition~\ref{subduced}.

\section{Associated Euclidean submersions}

In the previous section we have already mentioned that a Euclidean norm  always provides a Minkowski norm on the same vector space. On the other hand, when a Minkowski normed space $(V,\F)$ is given, we may associate a Euclidean vector space $(V,g_u)$ to each $u\in V$, after taking the identifications $T_uV_1\simeq V_1$ and $T_u\mu\simeq \mu$ into account. If $\{E_a\}$ is a basis for $V$, and if we denote the corresponding coordinates by $u^a$, then
\[
g_u (x,y) := \spd{\E}{u^a}{u^b}(u)x^ay^b, \qquad \forall x,y \in T_uV \simeq V.
\]

We now come back to the case where a submersion $\mu:(V_1,\F_1)\rightarrow (V_2,\F_2)$ of Minkowski spaces is given.

\begin{proposition} \label{Euclprop}
	Let $\mu:(V_1,\F_1)\rightarrow (V_2,\F_2)$ be a submersion of Minkowski spaces with corresponding nonlinear lift $\h: V_2 \to V_1$. 
	
	\begin{enumerate} \item[(i)] For each $v\in V_2$, the tangent space $T_{\h(v)}({\rm Im}\,\h)$ to the horizontal cone, is the horizontal subspace of the Euclidean norm $g^1_{\h(v)}$ in $V_1$.

\item[(ii)]	For each $v\in V_2$,  $\mu$ is a submersion between the Euclidean spaces $(V_1,g^1_{\h(v)})$ and $(V_2,g^2_{v})$.

\item[(iii)] The linear map $T_{\h(v)}\h$ is the  nonlinear lift $V_2 \to V_1$ that corresponds to the associated  Minkowski submersion of (ii). 
	\end{enumerate}

\end{proposition}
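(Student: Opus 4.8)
The plan is to carry out all three parts in adapted linear coordinates. Fix a basis $\{E_i,E_\alpha\}$ of $V_1$ exactly as in the proof of Proposition~\ref{subduced}: the $\{E_\alpha\}$ span $\ker\mu$ and the $\{\mu(E_i)\}$ form a basis of $V_2$. Write $\{u^a\}=\{v^i,w^\alpha\}$ for the induced coordinates on $V_1$ and $\{v^i\}$ for the corresponding coordinates on $V_2$, so that $\mu$ reads $(v^i,w^\alpha)\mapsto(v^i)$ and the nonlinear lift reads $v\mapsto\h(v)=(v^i,h^\alpha(v))$, while $\mu\circ\h=\mathrm{id}$ becomes tautological. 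Under the identifications $T_{\h(v)}V_1\simeq V_1$ and $T_vV_2\simeq V_2$, the differential $T_{\h(v)}\h$ is the linear map sending $\mu(E_j)$ to $P_j:=E_j+(\partial h^\alpha/\partial v^j)(v)\,E_\alpha$, and since $T\mu\circ T\h=\mathrm{id}$ the vectors $\{P_j\}$ span $T_{\h(v)}({\rm Im}\,\h)$. The one identity I will use repeatedly is obtained by differentiating the defining relation (\ref{minn2}) of the lift, $(\partial\E_1/\partial w^\alpha)(\h(v))=0$, with respect to $v^j$:
\[
\frac{\partial^2\E_1}{\partial w^\alpha\partial v^j}(\h(v))+\frac{\partial^2\E_1}{\partial w^\alpha\partial w^\beta}(\h(v))\,\frac{\partial h^\beta}{\partial v^j}(v)=0 ,
\]
which I will refer to as $(\star)$.

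For (i): a direct expansion of $g^1_{\h(v)}(E_\alpha,P_j)$ in these coordinates reproduces precisely the left-hand side of $(\star)$, and hence vanishes; thus each $P_j$ is $g^1_{\h(v)}$-orthogonal to each $E_\alpha$. Since the $\{P_j\}$ span $T_{\h(v)}({\rm Im}\,\h)$, the $\{E_\alpha\}$ span $\ker\mu$, and $\dim T_{\h(v)}({\rm Im}\,\h)=\dim V_2=\dim V_1-\dim\ker\mu$, the tangent space to the horizontal cone is the full $g^1_{\h(v)}$-orthogonal complement of $\ker\mu$, i.e.\ the horizontal subspace $\mathcal{H}_1$ of $g^1_{\h(v)}$. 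For (iii): by Example~2 of Section~\ref{nonlinearliftssection}, the nonlinear lift attached to a Euclidean submersion is the unique linear right inverse of $\mu$ whose image is the horizontal subspace of the domain metric. Now $T_{\h(v)}\h$ is linear, satisfies $\mu\circ T_{\h(v)}\h=T(\mu\circ\h)=\mathrm{id}$, and has image $T_{\h(v)}({\rm Im}\,\h)=\mathcal{H}_1$ by (i); by uniqueness it is that lift. So (iii) will follow once (ii) is established.

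For (ii): first record $\E_2(v)=\tfrac12\F_2(v)^2=\tfrac12\F_1(\h(v))^2=\E_1(\h(v))$ by the defining relation (\ref{eleven}) of $\h$. Differentiating once and using $(\partial\E_1/\partial w^\alpha)(\h(v))=0$ gives $(\partial\E_2/\partial v^j)(v)=(\partial\E_1/\partial v^j)(\h(v))$; differentiating once more in $v^k$ yields
\[
\frac{\partial^2\E_2}{\partial v^j\partial v^k}(v)=\frac{\partial^2\E_1}{\partial v^j\partial v^k}(\h(v))+\frac{\partial^2\E_1}{\partial v^j\partial w^\alpha}(\h(v))\,\frac{\partial h^\alpha}{\partial v^k}(v) .
\]
On the other hand, a full expansion of $g^1_{\h(v)}(P_j,P_k)$ produces the two terms on the right-hand side of this display, plus the two additional terms $(\partial^2\E_1/\partial w^\alpha\partial v^k)(\partial h^\alpha/\partial v^j)$ and $(\partial^2\E_1/\partial w^\alpha\partial w^\beta)(\partial h^\alpha/\partial v^j)(\partial h^\beta/\partial v^k)$, all evaluated at $\h(v)$; substituting $(\star)$ into the last of these shows that the two additional terms cancel. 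Hence $g^1_{\h(v)}(P_j,P_k)=(\partial^2\E_2/\partial v^j\partial v^k)(v)=g^2_v(\mu(E_j),\mu(E_k))=g^2_v(\mu(P_j),\mu(P_k))$, the last equality because $\mu(P_j)=\mu(E_j)$. Since the $\{P_j\}$ form a basis of $\mathcal{H}_1$ by part (i), bilinearity gives $g^1_{\h(v)}(x,y)=g^2_v(\mu(x),\mu(y))$ for all $x,y\in\mathcal{H}_1$, which is exactly the defining property of a submersion between the Euclidean spaces $(V_1,g^1_{\h(v)})$ and $(V_2,g^2_v)$.

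I expect the only genuine obstacle to be the index bookkeeping in part (ii), and in particular the observation that $(\star)$ — the derivative of the defining relation of the lift — is precisely what forces the two extra terms in the expansion of $g^1_{\h(v)}(P_j,P_k)$ to cancel against each other; everything else is either the same bookkeeping or a direct appeal to Propositions~\ref{basicprop} and~\ref{subduced} and to the Euclidean example of Section~\ref{nonlinearliftssection}.
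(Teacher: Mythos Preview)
Your proof is correct and follows essentially the same route as the paper: the same adapted basis, the same differentiated identity $(\star)$ coming from (\ref{minn2}), and the same appeal to Example~2 for part (iii). The only cosmetic difference is in part (ii): the paper uses part (i) to drop the vertical piece of $T_v\h(y)$ before expanding $g^1_{\h(v)}(T_v\h(x),T_v\h(y))$, so only two terms appear and no cancellation is needed, whereas you expand all four terms and then invoke $(\star)$ to cancel the extra pair --- both computations are equivalent and equally valid.
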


\begin{proof} In this proof we will use  a basis $\{E_i,E_\alpha\}$ for $V_1$ such that $\{E_\alpha\}$ spans $\ker \mu$ and $e_i=\mu(E_i)$ forms a basis of $V_2$. We will use $\{v^i,w^\alpha\}$ and $\{v^i\}$ as coordinates on $V_1$ and $V_2$. A nonlinear lift then takes the form 
\[
\h(v) = (v^i,\h^\alpha(v)).
\]
Recall that, in the current case of interest, $\h$ satisfies expression (\ref{minn2}).

 (i) We need to show that the subspace $T_{\h(v)}({\rm Im}\,\h)$ is the orthogonal complement to $\ker(T_{\h(v)}\mu) \simeq \ker\mu$ with respect to $g^1_{\h(v)}$. We show first that 
\begin{equation}\label{orthogonal}
	g^1_{\h(v)}(T_v\h(x),E_{\alpha})=0, \qquad \forall z\in V_2.
\end{equation}
For $x=x^ie_i$, we have  $T_v\h(x)=x^i\left(E_i+\frac{\partial \h^{\alpha}}{\partial v^i}(v)E_\alpha\right)$.
If we set \[g^1_{i\alpha}=g^1(E_i,E_j) = \spd{\E_1}{v^i}{v^j}, \quad
g^1_{i\alpha}=g^1(E_i,E_\alpha) = \spd{\E_1}{v^i}{w^\alpha},\quad g^1_{\alpha\beta}=g^1(E_{\alpha},E_\beta) = \spd{\E_1}{w^\alpha}{w^\beta}
,
\] 
we easily find that, in view of relation (\ref{minn2}), 
\[
g^1_{\h(v)}(T_v\h(x),E_{\alpha}) = x^i\left(g^1_{i\alpha}(\h(v)) +g^1_{\alpha\beta}(\h(w))\frac{\partial \h^{\beta}}{\partial v^i}(v) \right) =  x^i\frac{\partial}{\partial v^i}\left(\frac{\partial \E_1}{\partial w^{\alpha}}(\h(v))\right)=  0.
\]
This shows that $T_{\h(v)}({\rm Im}\,\h)$ is a subset of the horizontal space. It is in fact the whole space. Indeed, for any other horizontal vector $u \in T_{\h(v)} V_1 \simeq V_1$ with $T_{\h(v)}\mu(u) =x\in T_vV_2\simeq V_2$, the difference  $u-T_v\h(x)$ is $T_{\h(v)}\mu$-vertical. Since both are supposed to be horizontal we get that $g^1_{\h(v)}(u-T_v\h(x),E_{\alpha})=0$, which can only happen if $u=T_v\h(x)$.

(ii) We need to show, that for any $v,x,y\in V_2$ 
	\begin{equation}\label{relation}
		g^2_v(x,y)=g^1_{\h(v)}(T_v\h(x),T_v\h(y)).
	\end{equation}

 Since $\F_2 =\F_1\circ \h$, we also have $\E_2 =\E_1\circ \h$. From this
\[
\fpd{\E_2}{v^i} = \fpd{\E_1}{v^i}\circ \h + \left(\fpd{\E_1}{w^\alpha}\circ \h\right) \fpd{\h^\alpha}{v^i}
 = \fpd{\E_1}{v^i}\circ \h ,
\]
in view of expression (\ref{minn2}). A further derivative gives
\[
g^2_v(x,y)=x^iy^j\left (\spd{\E_2}{v^i}{v^j} \right) = x^iy^j\left( \spd{\E_1}{v^i}{v^j}\circ \h + \left(\spd{\E_1}{v^i}{w^\beta}\circ \h\right) \fpd{\h^\beta}{v^j}    \right). 
\]
On the other hand, if we take into account that in \[
T_v\h(y)=y^j\left(E_j+\frac{\partial \h^{\beta}}{\partial v^j}(v)E_\beta\right)
\] the second term is vertical, we also get
\[
g^1_{\h(w)}(T_v\h(x),T_v\h(y)) = g^1_{\h(w)}(T_v\h(x),y^jE_j) = x^iy^j\left( \spd{\E_1}{v^i}{v^j}\circ \h + \left(\spd{\E_1}{v^i}{w^\beta}\circ \h\right) \fpd{\h^\beta}{v^j}    \right)
\]
for the right-hand side.

(iii) The example at the end of Section~\ref{minkowskinormssection} shows that a Euclidean submersion is a special case of a Minkowski submersion. As such, we saw that Proposition~\ref{subduced} guarantees that there exists a corresponding nonlinear lift. Let us denote, for the Euclidean submersion of (ii), this nonlinear lift by ${\tilde \h}_v: V_2 \to V_1$. In the second example of Section~\ref{nonlinearliftssection} we have seen that seen that this lift is in fact linear, and that (in the current situation) it can be determined from  
\[
	g^1_{\h(v)}({\tilde\h}_v(x),{\tilde\h}_v(y)) = g^2_v(x,y).
\]
From expression (\ref{relation}), it is clear that ${\tilde \h}_v = T_{\h(v)}\h$.
 \end{proof}

The first property of Proposition~\ref{Euclprop} had already appeared in the literature, e.g.\ it features in some form in \cite{libing}. It is clear from the discussion above, that the orthogonal complement with respect to $g^1_{\h(v)}$ to the kernel of $\mu$ is not the horizontal cone, but rather the tangent space of the cone at  $\h(v)$.

\section{Finsler submersions} \label{Sec5}

In this section we come back to the situation of a fibre bundle $\pi: M \to N$ and a Finsler function $F$ on $M$. We have already mentioned that the restriction of a Finsler function to a specific tangent space, $F|_{T_mM}$ is a Minkowski norm $\F_m$ on the vector space $T_mM$.

The concept of a submersion between Finsler manifolds has been introduced in \cite{Alv} as a generalization of Riemannian submersions. It can be thought of as the analogue of submersions of Minkowskian spaces, to the context of Finsler manifolds. 
\begin{definition}\label{finslersub}
	A surjective map $\pi:(M,F_1)\rightarrow (N,F_2)$ between two Finsler manifolds is a Finsler submersion, if the tangent map $\mu=T_m\pi:T_mM\rightarrow T_{\pi (m)}N$ is a submersion between the Minkowski spaces $(T_mM, (\F_1)_m)$ and $(T_{\pi(m)}N, (\F_2)_{\pi(m)})$, for each $m\in M$. If such a submersion exists, we call $F_2$ the subduced Finsler function of $F_1$.
\end{definition}

It is important to realize that in this definition the two Finsler functions $F_1$ and $F_2$ are supposed to be given from the outset.
In Proposition~\ref{subduced} we have shown that a surjective linear map between a Minkowski normed space $(V_1,\F_1)$ and the vector space $V_2$ uniquely determines a subduced norm $\F_2$, making $(V_2,\F_2)$ a Minkowski normed space. However, in general, a subduced Finsler function can not be constructed in the same way. In fact, if $m ,\tilde m \in\pi^{-1}(n)$ are two points of $M$ located in the same fibre, both $T_{m}\pi$ and $T_{\tilde m}\pi$ give rise to (in general) different subduced Minkowski norms  on $T_{n}N$ (which we will denote by $(\F_2)_{m}$ and $(\F_2)_{\tilde m}$). One should for this reason be aware that in Definition~\ref{finslersub}, it is inherently assumed that all these subduced Minkowski norms coincide, and that they are the same as the restriction  to $T_nN$ of the given Finsler function $F_2$ (which we will denote by $(\F_2)_n$). 

In the next proposition we give a sufficient condition for the existence of a subduced Finsler function $F_2$. Recall that in Section~\ref{secnonlinsplit} we have associated  to a Finsler function $F_1$ a homogeneous nonlinear splitting $h:\pi^*TN \to TM$ by means of expression (\ref{definingrelation}). If we consider the restriction of this $h$ to a fixed $m=(x^i,y^\alpha)\in M$,  $\h_m: T_{\pi(m)}N \to T_mM, (v^i) \mapsto (v,h^\alpha(x,y,v))$, then $\h_m$ defines a nonlinear lift in the sense of Definition~\ref{defnonlinearlift}. In fact, if we define likewise $(\E_1)_{m}: T_m M\rightarrow\R \mbox{\,\,by\,\,} (v^i,w^\alpha) \mapsto E_1(x,y,v,w)$, then we can write expression (\ref{definingrelation}) also as 
\[
\fpd{(\E_1)_{m}}{w^\alpha}\circ \h_m  =0.
\]
When compared to relation (\ref{minn2}), this shows that each $\h_m$ is, in fact, the nonlinear lift that one can associate to the Minkowski norm $(\F_1)_m$.

\begin{proposition}\label{finslersplitting}
	Let $h:M\rightarrow N$ be the homogeneous splitting induced by the Finsler function $F_1$ on $M$ and assume that
	the canonical spray $S_1$ is tangent to the horizontal submanifold ${\rm Im}\,h$ (i.e.\ $
	\clift{Y}(E_1)\circ h=0$). Then the function
	\[
	F_2:=F_1\circ h
	\]
	determines a   function on $TN$. Moreover, it is a Finsler function with the property that $\pi$ is a submersion between the Finsler manifolds $(M,F_1)$ and $(N, F_2)$.
\end{proposition}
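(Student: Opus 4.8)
The plan is to prove this in three stages, corresponding to the three assertions: (a) that $F_2 := F_1 \circ h$ is well-defined as a function on $TN$ (and not just on $\pi^*TN$); (b) that it is actually a Finsler function; and (c) that $\pi$ is a Finsler submersion with respect to $F_1$ and $F_2$.

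\textbf{Step (a): $F_2$ descends to $TN$.} A priori $F_1 \circ h$ is a function on $\pi^*TN$, i.e.\ it could depend on the point $m \in M$ in a fibre and not only on its image $v_n \in TN$. What I would show is that, under the tangency hypothesis $\clift{Y}(E_1)\circ h = 0$ for all $\pi$-vertical $Y$, the value $E_1(h(m,v_n))$ is constant as $m$ ranges over the fibre $\pi^{-1}(n)$. Concretely, fix $v_n \in T_nN$ and consider the function $m \mapsto (E_1\circ h)(m,v_n)$ along a $\pi$-vertical direction. Differentiating in a vertical direction $Y$ means computing the derivative of $E_1(x,y,v,h^\alpha(x,y,v))$ with respect to $y^\beta$ (with $v$ held fixed), which is
\[
\fpd{E_1}{y^\beta}\circ h + \left(\fpd{E_1}{w^\alpha}\circ h\right)\fpd{h^\alpha}{y^\beta}.
\]
The second term vanishes by the defining relation (\ref{definingrelation}) of the nonlinear splitting, so the whole expression reduces to $(\partial E_1/\partial y^\beta)\circ h$, which is precisely $\clift{Y}(E_1)\circ h$ for $Y = \partial/\partial y^\beta$ (the complete lift of a vertical coordinate field has the form $Y^b\partial/\partial q^b + (\partial Y^c/\partial q^b)\, u^b\,\partial/\partial u^c$, and for the coordinate field this reduces to $\partial/\partial y^\beta$ acting on functions of $(q,u)$). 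By the tangency hypothesis this is zero, so $E_1\circ h$ is independent of the fibre coordinate; hence so is $F_1\circ h = \sqrt{2E_1\circ h}$, and $F_2$ is a genuine function on $TN$.

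\textbf{Step (b): $F_2$ is a Finsler function.} Here I would argue fibrewise and invoke Proposition~\ref{subduced}. Fix $n\in N$ and choose any $m \in \pi^{-1}(n)$. By the discussion in the paragraph preceding the proposition, the restriction $\h_m : T_nN \to T_mM$ is exactly the nonlinear lift associated (via Proposition~\ref{subduced}) to the Minkowski norm $(\F_1)_m$ on $T_mM$ and the surjective linear map $\mu = T_m\pi$. Proposition~\ref{subduced}(iii) gives that the subduced Minkowski norm on $T_nN$ equals $(\F_1)_m \circ \h_m$, which by definition is $(F_2)_n$ (the restriction of $F_2$ to $T_nN$); and Proposition~\ref{subduced}(ii) tells us this is a Minkowski norm. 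By Step (a) this Minkowski norm does not depend on the choice of $m$ in the fibre. Thus $F_2$ restricts to a Minkowski norm on each $T_nN$. Smoothness of $F_2$ on $\mathring{T}N$ follows because $h$ is smooth on $\pi^*\mathring{T}N$ and $F_1$ is smooth on $\mathring{T}M$; continuity at the zero section and positive homogeneity are immediate from $F_2 = F_1\circ h$ together with the homogeneity of the splitting (Proposition~\ref{symmetryprop}(i)). A smoothly-varying family of Minkowski norms is a Finsler function, as recalled in Section~\ref{minkowskinormssection}.

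\textbf{Step (c): $\pi$ is a Finsler submersion.} This is now essentially a restatement: by Definition~\ref{finslersub} we must check that for each $m\in M$ the tangent map $T_m\pi$ is a Minkowski submersion from $(T_mM,(\F_1)_m)$ to $(T_{\pi(m)}N,(\F_2)_{\pi(m)})$. But we just identified $(\F_2)_{\pi(m)}$ with the subduced norm of $(\F_1)_m$ under $T_m\pi$, and Proposition~\ref{subduced}(ii) says precisely that $T_m\pi$ is a submersion between the Minkowski spaces for that subduced norm. Hence $\pi$ is a Finsler submersion and $F_2$ is the subduced Finsler function of $F_1$.

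\textbf{Main obstacle.} The only genuinely substantive point is Step (a): showing that $F_1 \circ h$ depends only on $v_n$ and not on the chosen point of the fibre. Everything else is a fibrewise application of Proposition~\ref{subduced} combined with routine regularity bookkeeping. The delicate part of Step (a) is identifying $\clift{Y}(E_1)\circ h$ correctly with the fibre-derivative of $E_1\circ h$; one must be careful that the complete lift $\clift{Y}$ contributes only the $\partial/\partial y^\beta$-term once the defining relation $(\partial E_1/\partial w^\alpha)\circ h = 0$ is used to kill the $\partial/\partial w^\alpha$-contribution, and that this argument is done along vertical directions with the $TN$-argument $v$ held fixed rather than along $h$-horizontal directions. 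Once that bookkeeping is in place, the global statement follows by connecting points in a fibre by paths and integrating.
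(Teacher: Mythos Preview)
Your proof is correct and follows essentially the same line as the paper's: both compute $\partial(E_1\circ h)/\partial y^\beta$ using the defining relation of $h$ together with $\clift{Y}(E_1)\circ h=0$ to see that $F_1\circ h$ descends to $TN$, and then both invoke Proposition~\ref{subduced} fibrewise (via the identification of $\h_m$ with the lift produced there) to obtain the Minkowski-norm property and the submersion statement. The only notable difference is that the paper takes one further derivative to deduce the stronger fact $\partial h^\gamma/\partial y^\beta=0$, i.e.\ that the splitting itself (not just $F_1\circ h$) is constant along fibres; this is an interesting side observation but, as your argument shows, it is not needed for the proposition as stated, since the fibre-independence of the subduced norms already follows from Step~(a).
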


\begin{proof}
In view of the the condition $	\clift{Y}(E_1)\circ h=0$,  and in view of the definition of $h$ in (\ref{definingrelation}), we see that 
\[
\fpd{(E_1\circ h)}{y^\alpha} = \fpd{E_1}{y^\alpha} \circ h + \left(\fpd{E_1}{w^\beta}\circ h\right) \fpd{h^\beta}{y^\alpha}=0.  
\]
The functions $E_1\circ h$ and $\bar F_2:=\sqrt{2E_1\circ h}$ can for this reason be interpreted as  functions on $TN$. Since it is a composition of smooth functions (except on the zero section), it is regular. $\bar F_2$ is a composition of  homogeneous functions, and therefore it is homogeneous itself. 
Moreover, 
\begin{eqnarray*}
0 &=& \fpd{}{y^\beta} \left( \frac{\partial E_1}{\partial w^{\alpha}} \circ h \right) = \left(\spd{E_1}{y^\beta}{w^\alpha}\right)\circ h+ \left(\spd{E_1}{w^\alpha}{w^\gamma} \circ h\right) \fpd{h^\gamma}{y^\beta} \\&=& \fpd{}{w^\alpha}\left( \fpd{E_1}{y^\beta} \circ h \right) +  \left(\spd{E_1}{w^\alpha}{w^\gamma} \circ h\right) \fpd{h^\gamma}{y^\beta} = \left(\spd{E_1}{w^\alpha}{w^\gamma} \circ h \right)\fpd{h^\gamma}{y^\beta}.
\end{eqnarray*}
Since the matrix $\left(\spd{E_1}{w^\beta}{w^\gamma}\right)$ is always non-degenerate, we may conclude from this that  $\partial{h^\gamma}/\partial{y^\beta} =0$. For this reason, the induced nonlinear lifts that one can associate to either $m$ or $\tilde m$ (with  $\pi(m)=\pi(\tilde m)$) are equal: $\h_m=\h_{\tilde m}$.

We are therefore in the situation of Proposition~\ref{subduced} and in each tangent space $T_nN$ we can construct (the same) Minkowski norm $(\F_2)_m = (\F_2)_{\tilde m}$ (which we may now denote as $(\F_2)_n$, for convenience) from either one of the submersions $T_m\pi$ between the Minkowski spaces $(T_m M,(\F_1)_m)$ and $(T_n N,(\F_2)_n)$ in the fibre over $n$ (i.e.\ for each choice of $m\in\pi^{-1}(n)$). The collection over all $n$ of all these norms $(\F_2)_n$ gives by construction the function $F_2=F_1\circ h$, we had defined above.  For this reason $F_2$ is a Finsler function. 
\end{proof} 

\textbf{Example.} Assume that a connected Lie group $G$ acts freely and properly on a Finsler manifold $(M,F)$ and that the Finsler function $F$ is invariant under the induced action of $G$ on $TM$. Then, $\pi: M\rightarrow M/G=N$ is a principal fibre bundle. Keeping the notation of the previous paragraphs (following e.g.\ \cite{Inv}) the invariance property of $F$ can be locally expressed as
\[
\clift{{\xi}_M}(E)=0, \qquad \forall \xi\in\la.
\]
Here, $\xi_M$ stands for the fundamental vector field on $M$ of $\xi$,  associated to the group action. Since one may form a basis for the set of $\pi$-vertical vector fields on $M$ consisting of only such vector fields, we have that  $\clift{Y}(E) =0$, i.e.\ condition  (\ref{symmetrycondition}) is satisfied on the whole of $TM$. Proposition~\ref{finslersplitting} then guarantees the existence of a subduced Finsler function on $N$.

In the next sections, we indicate how the results from the previous sections can be used to provide new examples of Finsler functions on homogeneous spaces.

\section{Finsler submersions to reductive homogeneous spaces} \label{homspacesec}

Let $M=G$ be a connected Lie group (with unit element $e$),  $H<G$ be a closed subgroup of $G$ and $G/H$ the corresponding space of left cosets $gH$ (with notation $o=eH$). We will denote the natural projection  by $\pi:G\to G/H, g \to gH$. Then,  $G/H$ inherits a transitive left action of $G$, given by $
\tau_{g_1}(g_2H)=(g_1g_2)H
$. It is well-known (see e.g.\ \cite{Lee}) that there exists a unique differentiable structure on $G/H$ such that $\pi$ is a smooth submersion. We will denote this  homogeneous space by $N=G/H$. 

We will denote the Lie algebras of $G$ and $H$ by $\mathfrak{g}$ and $\mathfrak{h}$, respectively. The kernel of $T_e\pi: \mathfrak g \to T_oN$ is the subspace $\mathfrak{h}$.
Straightforward computation shows that
\begin{eqnarray}\label{Ad}
	T_e\pi\circ \text{Ad}_h &=& T_o\tau_h \circ T_e\pi, \qquad \forall h\in H, \\
T_o\tau_g\circ T_e\pi &=&  T_g\pi \circ T_eL_g 	\qquad \forall g\in G\mbox{\,\,\,with \,\,\,} \pi(g)=o.  \label{general}
\end{eqnarray}

In the remainder of the paper, we will assume that  the homogeneous space $G/H$ is {\em reductive}, that is: that there exists a subspace $\mathfrak{m}$ of $\mathfrak{g}$, such that
$
\mathfrak{g}=\mathfrak{h}\oplus\mathfrak{m},
$
and such that the subspace $\mathfrak{m}$ is invariant under the adjoint action of $H$ on $\mathfrak{g}$. Then, $[\mathfrak{h},\mathfrak{m}]\subset \mathfrak{m}$.  

The restriction of the map $T_e\pi$ to $\mathfrak m$ defines an isomorphism between $\mathfrak{m}$ and $T_o(G/H)=T_oN$. When we want to make explicit use of this isomorphism, we will write $\beta :\mathfrak{m}\rightarrow T_oN,  \zeta \mapsto T_e\pi (\zeta)$ and $\beta^{-1}$ for its inverse. Since $\text{Ad}_h\zeta\in \mathfrak m$ when $\zeta\in \mathfrak m$, relation (\ref{Ad}) becomes
$
\beta(\text{Ad}_h \zeta)= T_o\tau_h (\beta(\zeta)), 
$ for all $\zeta \in \mathfrak m$, or
\begin{equation}\label{Ad2}
 \text{Ad}_h \circ \beta^{-1} = \beta^{-1}\circ T_o\tau_h, \qquad \forall h\in H.
\end{equation}

	A Finsler function $F: TG \to\R$ on a Lie group $G$ is said to be {\em left-invariant} if left translations are isometries:
	\[
		F ((e,\xi))=F((g,T_eL_g(\xi))), \qquad \forall (g,\xi) \in G\times \mathfrak g.
	\]
 $F$ is {\em bi-invariant} if both left and right translations are  isometries.

It is easy to see that there exists a one-to-one correspondence between  left $G$-invariant Finsler metrics $F$ on  $G$ and Minkowski norms $\F_e$ on the Lie algebra $\mathfrak{g}$. The relation is given by
\[
\F_e(\xi) = F(e,\xi)\qquad \mbox{and} \qquad F(g,v_g) = \F_e(T_gL_{g^{-1}}v_g).
\]
 In case $F$ is assumed to be left $G$-invariant, but only right $H$-invariant, the corresponding Minkowski norm $\F_e$ on $\la$ is $\text{Ad}(H)$-invariant (and vice versa). Indeed, 
\[
\F_e(\text{Ad}_h\xi) = F(e, \text{Ad}_h\xi) = F(h^{-1}eh, T_hL_{h^{-1}} T_eR_{h}(\text{Ad}_h\xi) ) = F(e,\xi) = \F_e(\xi).
\]
A small extension of this property shows that bi-invariant Finsler function $F$ are in one-to-one correspondence with  $\text{Ad}(G)$-invariant Minkowski norms on $\la$.

In the next proposition we make use of our framework to guarantee the existence of a subduced Finsler function.
\begin{proposition}\label{mainn}
Assume that $G/H$ is reductive  and that $F_1$ is a Finsler function on the Lie group $G$, that is left $G$- and right $H$-invariant. Then, 
\begin{itemize} \item[(i)] there exists an $\text{Ad}(H)$-equivariant homogeneous nonlinear lift $\bar \h: \mathfrak m \to \mathfrak g$ for the linear surjective map $p_{\mathfrak m}: {\mathfrak g} \to {\mathfrak m}$,

\item[(ii)]
there exists a
 unique  subduced, $G$-invariant Finsler metric $F_2$ on the homogeneous space $G/H$, such that $\pi: (G,F_1) \to (G/H,F_2)$ is a Finsler submersion,
	\item[(iii)] the homogeneous nonlinear  splitting $h: \pi^*T(G/H) \to TG$ of the submersion is related to the nonlinear lift $\bar\h$ of (i) by means of
	\[
	h(g,v_n) = T_eL_g {\bar \h} \left(\beta^{-1}\left(T_n\tau_{g^{-1}}(v_n)\right)\right).
	\] 
\end{itemize}	
\end{proposition}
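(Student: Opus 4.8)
The plan is to reduce the whole statement to Proposition~\ref{subduced}, applied to the surjective linear map $p_{\mathfrak m}:\la\to\mathfrak m$. First I would recall, from the discussion just preceding the statement, that the left $G$-invariant Finsler function $F_1$ corresponds to a Minkowski norm $\F_e$ on $\la$, and that its additional right $H$-invariance is equivalent to $\F_e$ being $\text{Ad}(H)$-invariant. Feeding $p_{\mathfrak m}$ and $\F_e$ into Proposition~\ref{subduced} then produces an associated homogeneous nonlinear lift $\bar\h:\mathfrak m\to\la$ and a subduced Minkowski norm $\bar\F$ on $\mathfrak m$ with $\F_e\circ\bar\h=\bar\F$, the lift being characterised by the property that $\bar\h(\zeta)$ is the unique minimiser of $\F_e$ on the affine subspace $p_{\mathfrak m}^{-1}(\zeta)=\zeta+\mathfrak h$. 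This already gives the existence part of (i).

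For the $\text{Ad}(H)$-equivariance in (i) I would fix $h\in H$ and note that $\text{Ad}_h$ preserves $\mathfrak m$ (reductivity) and preserves $\mathfrak h$ (it is the Lie algebra of $H$), hence commutes with $p_{\mathfrak m}$ and maps $p_{\mathfrak m}^{-1}(\zeta)$ bijectively onto $p_{\mathfrak m}^{-1}(\text{Ad}_h\zeta)$. Because $\F_e$ is $\text{Ad}(H)$-invariant, $\text{Ad}_h$ carries the $\F_e$-minimiser over $p_{\mathfrak m}^{-1}(\zeta)$ to the $\F_e$-minimiser over $p_{\mathfrak m}^{-1}(\text{Ad}_h\zeta)$, and uniqueness of that minimiser forces $\bar\h(\text{Ad}_h\zeta)=\text{Ad}_h\bar\h(\zeta)$. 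The same computation, together with $\F_e\circ\bar\h=\bar\F$, also shows that $\bar\F$ is $\text{Ad}(H)$-invariant, a fact I will reuse.

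For (ii) I would invoke Proposition~\ref{finslersplitting}. Indeed $\pi:G\to G/H$ is a principal $H$-bundle whose vertical distribution is spanned by the left-invariant vector fields $\xi^{\mathrm L}$, $\xi\in\mathfrak h$ (the fundamental vector fields of the right $H$-action), and the right $H$-invariance of $F_1$ gives $\clift{\xi^{\mathrm L}}(E_1)=0$; by the argument in the Example following Proposition~\ref{finslersplitting} this propagates to $\clift{Y}(E_1)=0$ for every $\pi$-vertical $Y$, so condition (\ref{symmetrycondition}) holds. Proposition~\ref{finslersplitting} then yields that $F_2:=F_1\circ h$ descends to a well-defined Finsler function on $TN$ and that $\pi:(G,F_1)\to(G/H,F_2)$ is a Finsler submersion; uniqueness of such an $F_2$ follows from Proposition~\ref{basicprop}, which pins down the subduced Minkowski norm on each $T_nN$. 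The $G$-invariance of $F_2$ I would read off at the very end from the formula in (iii).

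For (iii) I would use that, as noted just before Proposition~\ref{finslersplitting}, the restriction $\h_g:=h(g,\cdot):T_{\pi(g)}N\to T_gG$ is the nonlinear lift associated to $(\F_1)_g$ and $T_g\pi$, so that $h(g,v_n)$ is the unique $(\F_1)_g$-minimiser on $(T_g\pi)^{-1}(v_n)$. Differentiating $\pi\circ L_g=\tau_g\circ\pi$ at $e$ gives $T_g\pi\circ T_eL_g=T_o\tau_g\circ T_e\pi$ for every $g\in G$, while $T_e\pi$ restricts to the isomorphism $\beta$ on $\mathfrak m$ and annihilates $\mathfrak h$; combining this with $(\F_1)_g=\F_e\circ T_gL_{g^{-1}}$ and the substitution $\xi=T_gL_{g^{-1}}w$, the minimisation of $(\F_1)_g$ over $(T_g\pi)^{-1}(v_n)$ becomes the minimisation of $\F_e$ over $p_{\mathfrak m}^{-1}\big(\beta^{-1}(T_n\tau_{g^{-1}}(v_n))\big)$, whose solution is $\bar\h\big(\beta^{-1}(T_n\tau_{g^{-1}}(v_n))\big)$, so that $h(g,v_n)=T_eL_g\,\bar\h\big(\beta^{-1}(T_n\tau_{g^{-1}}(v_n))\big)$; and then $F_2(n,v_n)=F_1(h(g,v_n))=\bar\F\big(\beta^{-1}(T_n\tau_{g^{-1}}(v_n))\big)$ is visibly $G$-invariant since $T_{\tau_{g_0}(n)}\tau_{(g_0g)^{-1}}\circ T_n\tau_{g_0}=T_n\tau_{g^{-1}}$. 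I expect the main obstacle to be not any single deep step but the bookkeeping in (ii)--(iii): one must be careful that the subduced norm is genuinely independent of the representative $g$ in a fibre — this is exactly what Proposition~\ref{finslersplitting} (via the vanishing of the $y$-derivatives of the splitting coefficients), equivalently the $\text{Ad}(H)$-invariance of $\bar\F$, secures — and one must keep straight the equivariance relations (\ref{Ad})--(\ref{Ad2}) relating $\pi$, $L_g$, $\tau_g$ and $\text{Ad}$.
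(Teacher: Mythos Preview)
Your argument is correct. Part (i) is essentially identical to the paper's: apply Proposition~\ref{subduced} to $p_{\mathfrak m}$ and the $\text{Ad}(H)$-invariant norm $\F_e$, then obtain the equivariance of $\bar\h$ and the invariance of $\bar\F$ from the uniqueness of the minimiser. For parts (ii) and (iii), however, your route differs from the paper's. The paper does \emph{not} invoke Proposition~\ref{finslersplitting}; instead it builds $F_2$ by hand via $(\F_2)_n(v_n)=\bar\F_2\big(\beta^{-1}(T_n\tau_{g^{-1}}v_n)\big)$, checks well-definedness from the $\text{Ad}(H)$-invariance of $\bar\F_2$, and then verifies the Minkowski-submersion property (\ref{nogeenhulpke}) directly at every $g$. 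For (iii) the paper cites \cite{ourthirdarticle} for the left-invariance $h(g,v_n)=T_eL_g\,h(e,v_n)$ of the splitting, whereas you obtain the full formula in one stroke by transporting the minimisation problem from $T_gG$ to $\la$ via $T_eL_g$ and the identity $T_g\pi\circ T_eL_g=T_o\tau_g\circ T_e\pi$.

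What each approach buys: your route is more internal to the paper --- it actually \emph{uses} the machinery of Proposition~\ref{finslersplitting} and the principal-bundle Example that the paper set up, and it avoids the external citation in (iii), giving a self-contained derivation of the splitting formula. The paper's route, by contrast, makes the $G$-invariance of $F_2$ and the fibrewise submersion property completely explicit from the outset, at the cost of repeating in coordinates a computation that Proposition~\ref{finslersplitting} already packages. Your deferral of the $G$-invariance to the end is fine, since the formula $F_2(n,v_n)=\bar\F\big(\beta^{-1}(T_n\tau_{g^{-1}}v_n)\big)$ you derive is exactly the paper's definition, from which invariance is immediate.
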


\begin{proof}
(i)	Under the assumptions of the proposition, $F_1$  determines an $\text{Ad}(H)$-invariant Minkowski norm $(\F_1)_e$ on $\mathfrak{g}$. Consider the surjective linear map $p_{\mathfrak{m}}: \mathfrak g \to \mathfrak m$, with $p_{\mathfrak{m}}=\beta^{-1}\circ T_e\pi$.  By Proposition~\ref{subduced}, this Minkowski norm determines a unique subduced Minkowski norm $\bar{\F}_2$ on $\mathfrak{m}$ by
	\begin{equation}\label{F2}
		{\bar \F}_2(\zeta)=\inf \{(\F_1)_e(\xi)~|~p_{\mathfrak{m}}(\xi)=\zeta \}.
	\end{equation}
First, we show that this Minkowski norm is invariant under the induced adjoint action of $H$.
	 	Since $\text{Ad}_h:\mathfrak{g}\rightarrow\mathfrak{g}$ is bijective, we have
	\begin{eqnarray}
	\nonumber {\bar\F_2}(\text{Ad}_h(\zeta))&=&\inf \{(\F_1)_e(\xi)~|~p_{\mathfrak{m}}(\xi)=\text{Ad}_h(\zeta) \} \\
	\nonumber &=&\inf \{(\F_1)_e(\text{Ad}_h(\xi))~|~p_{\mathfrak{m}}(\text{Ad}_h(\xi))=\text{Ad}_h(\zeta) \}.
	\end{eqnarray}
	In view of expression (\ref{Ad2}) $p_{\mathfrak{m}}$ and $\text{Ad}_h$ are interchangeable: 
	\[
	p_{\mathfrak{m}}(\text{Ad}_h(\xi))=(\beta^{-1}\circ T_e\pi\circ\text{Ad}_h)(\xi)=(\beta^{-1}\circ T_o\tau\circ T_e\pi)  (\xi)=(\text{Ad}_h\circ \beta^{-1}\circ T_e\pi)(\xi)=\text{Ad}_h(p_{\mathfrak{m}}(\xi)).
	\]
	Using this, we have
	\begin{eqnarray}
	\nonumber {\bar \F}_2(\text{Ad}_h(\zeta))&=&\inf \{(\F_1)_e(\text{Ad}_h(\xi))~|~p_{\mathfrak{m}}(\xi)=\zeta \} \\
	\nonumber &=&\inf \{(\F_1)_e(\xi)~|~p_{\mathfrak{m}}(\xi)=\zeta \}=\bar{\F}_2(\zeta),
	\end{eqnarray}
	where in the last step we used that $(\F_1)_e$ is $\text{Ad}(H)$-invariant.

Since the norm ${\bar F}_2$ is subduced, it defines a nonlinear lift $\bar\h: {\mathfrak m} \to {\mathfrak g}$. This lift satisfies $(\F_1)_e(\bar \h(\zeta)) = {\bar \F}_2 (\zeta)$. It has the required equivariance because of
\[
(\F_1)_e(\bar \h(\text{Ad}_h\zeta)) = {\bar \F}_2 (\text{Ad}_h\zeta) = \bar{\F}_2(\zeta) = (\F_1)_e(\bar \h(\zeta)) = (\F_1)_e (\text{Ad}_h \bar\h(\zeta)).
\]
Since both $p_{\mathfrak m}(\bar\h(\text{Ad}_h\zeta)) = \text{Ad}_h\zeta$ and $p_{\mathfrak m}(\text{Ad}_h \bar\h(\zeta)) = \text{Ad}_h(p_{\mathfrak m} ( \bar\h(\zeta))= \text{Ad}_h\zeta$, we may conclude from the  uniqueness of the nonlinear lift that
\[
\bar\h \circ \text{Ad}_h = \text{Ad}_h \circ \bar\h.
\]

(ii)	We now extend the $Ad(H)$-invariant Minkowski norm ${\bar\F}_2$ on $\mathfrak m$ to a function $(\F_2)_n$ on $T_n(G/H)$: if $n=gH$ we set
	\[
	(\F_2)_n(v_n)={\bar\F}_2\left(\beta^{-1}(T_n\tau_{g^{-1}}(v_n))\right).
	\]
	This is	independent of the choice of $g$ because $T\tau_{(gh)^{-1}} =T\tau_{h^{-1}} \circ T\tau_{g^{-1}}$. The result then follows from relation (\ref{Ad2}) and the $\text{Ad}(H)$-invariance of ${\bar\F}_2$.
	
We may therefore conclude that $F_2: T(G/H)\to\R$, with 
\[
F_2(n,v_n) = (\F_2)_n(v_n),
\]	
is a Finsler function on $TN$. 

Consider $g\in G$ with $\pi(g) = n$. The Minkowski norm $(\F_1)_g$ on $T_gG$ is given by $(\F_1)_g(v_g) = F_1(g,v_g)$. In view of Definition~\ref{finslersub}, we only need to show that  
	\[
	T_g\pi :(T_gG,(\F_1)_g )\rightarrow (T_{n} (G/H),(\F_2)_n)
	\]
	is a submersion between Minkowski normed spaces for all $g\in G$ to be able to conclude that $\pi: (G,F_1) \to (G/H,F_2)$ is a submersion of Finsler manifolds. To show that, we prove that for any $v_n\in T_n(G/H)$,
	\begin{equation} \label{nogeenhulpke}
	(\F_2)_n(v_n) = \inf \{(\F_1)_g(u_g)~|~T_g\pi (u_g)=v_n\}.
	\end{equation}
	Using the definitions of $\F_2$ and $p_{\mathfrak{m}}$ and the left-invariance of $\F_1$ we get
	\begin{eqnarray*}
	\nonumber 	(\F_2)_n(v_n)&=&(\bar\F_2)_n((\beta^{-1}\circ T_n\tau_{g^{-1}})(v_n)) \\
	\nonumber &=& \inf \{(\F_1)_e(\xi)~|~p_{\mathfrak{m}}(\xi)=(\beta^{-1}\circ T_n\tau_{g^{-1}})(v_n)\} \\
	\nonumber &=&\inf \{(\F_1)_e(\xi)~|~ (\beta^{-1}\circ T_e\pi)(\xi)=(\beta^{-1}\circ T_n\tau_{g^{-1}})(v_n)\} \\
	\nonumber &=&\inf \{(\F_1)_e(\xi)~|~T_e\pi(\xi)= T_n\tau_{g^{-1}}(v_n)\} \\
	\nonumber &=&\inf \{(\F_1)_g(T_eL_g\xi)~|~T_e\pi(\xi)= T_n\tau_{g^{-1}}(v_n)\}.
	\end{eqnarray*}
Notice that $T_eL_g$ is bijective, and therefore an arbitrary element $u_g\in T_gG$ can be written in the form of $T_eL_g\xi$ with $\xi\in\mathfrak{g}$. On the other hand, relation (\ref{general}) implies that the equation $T_e\pi(\xi)= T_n\tau_{g^{-1}}(v_n)$ is equivalent with  $T_e\pi(u_g)=v_n$. This proves expression (\ref{nogeenhulpke}). 

(iii) Composing the defining relation of $\bar h$ with $(\beta^{-1}\circ T_n\tau_{g^{-1}})(v_n)$ leads to
\[
((\F_1)_e\circ \bar\h )\left( (\beta^{-1}\circ T_n\tau_{g^{-1}})(v_n) \right) ={\bar\F}_2\left(\beta^{-1}(T_n\tau_{g^{-1}}(v_n))\right) = (\F_2)_n(v_n).
\]
On the other hand, by the definition of $h$
\[
(\F_1)_e ( h(e,v_n))=(\F_2)_n(v_n).
\]
From the uniqueness of the nonlinear lifts, we conclude that 
\[
h(e,v_n) = \bar \h \left( (\beta^{-1}\circ T_n\tau_{g^{-1}})(v_n) \right).
\]
In \cite{ourthirdarticle} (Proposition~7 and 8) we have shown that an invariant Lagrangian function induces an invariant associated splitting, in the sense that 
\[
h(g,v_n)=T_eL_g(h(e,v_n)).
\]
From that, we see that
\[
h(g,v_n)=T_eL_g\left( \bar \h \left( (\beta^{-1}( T_n\tau_{g^{-1}}(v_n)) \right)\right).
\]
\end{proof}
In the  statement of Proposition~\ref{mainn} we required that  $F_1$ was a  left $G$- and right $H$-invariant Finsler function on $G$. In the next section we will 
construct examples where we start from a Finsler function that satisfies this property, namely a bi-invariant Finsler function on $G$.

\section{Subduced Finsler functions on $S^3$}  \label{exsec}

In what follows we focus our attention to semisimple Lie algebras $\la$ and discuss the relation between $\text{Ad}$ and $\text{Ad}^*$-invariant functions from Finsler functions on $\la$ and $\la^*$. Later, we  apply these observations to construct new examples of Finsler functions on $S^3$, subduced from Finsler functions on $SO(4)$.

A Lie algebra $\mathfrak{g}$ is {\em semisimple} if the Killing form $\kappa: \mathfrak{g}\times\mathfrak{g}\rightarrow \R$, defined by
	\[
	\kappa (\xi,\eta)=\tr (\ad_\xi\circ \ad_\eta)
	\]
	is non-degenerate. The Killing form $\kappa$ is a symmetric bilinear form that is invariant under all automorphisms of $\mathfrak{g}$. In particular, $\kappa$ is $Ad_g$-invariant for all $g\in G$:
		\[
		\kappa (Ad_g\xi,Ad_g\eta )=\kappa (\xi,\eta).
		\]
The linear maps $\ad_\xi$ are skew-adjoint w.r.t $\kappa$ for all $X\in\mathfrak{g}$:
		\[
		\kappa (\ad_\xi(\eta),\zeta)=-\kappa (\eta,\ad_\xi(\zeta)).
		\]

It is well-known that a connected Lie-group $G$ is compact and semisimple if and only if the Killing form of its Lie-algebra is negative definite.

A function $\BL$ on ${\mathfrak g}$ is $\text{Ad}(G)$-invariant if $\BL(\text{Ad}_g \xi) = \BL(\xi)$, for all $g\in G$. Likewise,  
a function $\H$ on ${\mathfrak g}^*$ is $\text{Ad}^*(G)$-invariant if $\H(\text{Ad}^*_g \theta) = \H(\theta)$.

The Killing form defines a `flat' map $\xi \in \la \mapsto \xi^\flat \in \la^*$, where $\xi^{\flat}(\eta) = \kappa(\xi,\eta)$, for all $\eta\in\la$ and, likewise, an inverse `sharp' map $\theta \in \la^* \mapsto \theta^\sharp \in \la$.

\begin{proposition} \label{dual} On a semisimple Lie algebra $\la$, there exist a one-to-one correspondence between $\text{Ad}(G)$-invariant functions $\BL$ on $\la$
and $\text{Ad}^*(G)$-invariant functions $\H$ on $\la^*$,  given by
\[  \BL(\xi) = \H(\xi^\flat) \qquad \mbox{and} \qquad \H (\theta) = \BL(\theta^\sharp).
\]  
\end{proposition}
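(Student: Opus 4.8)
The plan is to exploit the fact that the flat and sharp maps induced by the Killing form $\kappa$ intertwine the adjoint and coadjoint actions, and then simply transport invariance across this equivariant isomorphism. First I would record the equivariance property: for $\xi\in\la$ and $g\in G$, one has $(\text{Ad}_g\xi)^\flat = \text{Ad}^*_g(\xi^\flat)$. This follows directly from the $\text{Ad}_g$-invariance of $\kappa$ stated just above the proposition, since for all $\eta\in\la$,
\[
(\text{Ad}_g\xi)^\flat(\eta) = \kappa(\text{Ad}_g\xi,\eta) = \kappa(\xi,\text{Ad}_{g^{-1}}\eta) = \xi^\flat(\text{Ad}_{g^{-1}}\eta) = (\text{Ad}^*_g(\xi^\flat))(\eta),
\]
using the definition of the coadjoint action $\text{Ad}^*_g = (\text{Ad}_{g^{-1}})^*$. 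Taking the sharp map as inverse, one gets the companion identity $(\text{Ad}^*_g\theta)^\sharp = \text{Ad}_g(\theta^\sharp)$.

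Next I would define the correspondence. Given an $\text{Ad}(G)$-invariant function $\BL$ on $\la$, set $\H := \BL\circ\sharp$, i.e.\ $\H(\theta) = \BL(\theta^\sharp)$; given an $\text{Ad}^*(G)$-invariant function $\H$ on $\la^*$, set $\BL := \H\circ\flat$, i.e.\ $\BL(\xi) = \H(\xi^\flat)$. I then need to check three things: (a) $\H$ so defined is $\text{Ad}^*(G)$-invariant whenever $\BL$ is $\text{Ad}(G)$-invariant, and symmetrically; (b) the two assignments are mutually inverse; (c) regularity/smoothness is preserved (trivial, since $\flat$ and $\sharp$ are linear isomorphisms). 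For (a): $\H(\text{Ad}^*_g\theta) = \BL((\text{Ad}^*_g\theta)^\sharp) = \BL(\text{Ad}_g(\theta^\sharp)) = \BL(\theta^\sharp) = \H(\theta)$, using the equivariance identity from the first step and the $\text{Ad}(G)$-invariance of $\BL$; the reverse direction is identical with $\flat$ in place of $\sharp$. For (b): since $\sharp$ and $\flat$ are inverse to each other (the Killing form is non-degenerate because $\la$ is semisimple, so $\flat$ is a linear isomorphism with inverse $\sharp$), we have $(\BL\circ\flat)\circ\sharp = \BL$ and $(\H\circ\sharp)\circ\flat = \H$, so the two maps between the spaces of invariant functions are genuinely inverse bijections.

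I do not expect any serious obstacle here; the only point requiring a little care is making sure the coadjoint action is the one that renders $\flat$ equivariant, i.e.\ that $\text{Ad}^*_g$ is defined as $(\text{Ad}_{g^{-1}})^*$ rather than $(\text{Ad}_g)^*$, so that the $g^{-1}$ lands in the right place in the displayed computation above. Once that convention is fixed, the proof is a two-line chase on each side. I would present it compactly: state the equivariance lemma $(\text{Ad}_g\xi)^\flat = \text{Ad}^*_g(\xi^\flat)$ with its one-line proof from $\text{Ad}_g$-invariance of $\kappa$, then observe that $\BL\mapsto \BL\circ\flat$ and $\H\mapsto \H\circ\sharp$ are mutually inverse linear substitutions that carry invariant functions to invariant functions, which is exactly the claimed bijection.
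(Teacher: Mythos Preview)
Your proposal is correct and follows essentially the same approach as the paper: both establish the equivariance identity $(\text{Ad}_g\xi)^\flat = \text{Ad}^*_g(\xi^\flat)$ via the $\text{Ad}$-invariance of $\kappa$, deduce the companion identity for $\sharp$, and then transport invariance through these maps. Your write-up is in fact slightly more complete than the paper's, since you explicitly verify that $\BL\mapsto\H\circ\flat$ and $\H\mapsto\BL\circ\sharp$ are mutually inverse (using the non-degeneracy of $\kappa$), whereas the paper leaves this implicit.
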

\begin{proof}
 It is clear that
\[
(\text{Ad}_g \xi)^\flat = \text{Ad}_{g}^* \xi^\flat
\]
since $(\text{Ad}_g \xi)^\flat (\eta)=\kappa(\text{Ad}_g \xi, \eta) = \kappa (\xi,\text{Ad}_{g^{-1}}\eta) = \xi^\flat(\text{Ad}_{g^{-1}} \eta) =   (\text{Ad}_{g}^* \xi^\flat)(\eta)$. Similarly
\[
(\text{Ad}^*_g \theta)^\sharp = \text{Ad}_{g} \theta^\sharp.
\]
With that, we see that, when $\H$ is $\text{Ad}^*(G)$-invariant, 
\[
\BL(\text{Ad}_g\xi) = \H((\text{Ad}_g\xi)^\flat) = \H(\text{Ad}_{g}^* \xi^\flat) = \H( \xi^\flat) = \BL(\xi),
\]
and similarly for the case when $\BL$ is assumed to be $\text{Ad}(G)$-invariant.
\end{proof}

It is of interest to look at some coordinate expressions. Let $\{E_a\}$ be a basis of $\la$, and $\{E^a\}$ the corresponding dual basis. We will use coordinates $w^a$ on $\la$ and $p_a$ on $\la^*$. If we set $[E_a,E_b] = C_{ab}^c E_c$, one may easily verify that a function $\BL(w)$ on $\la$ is invariant if 
\begin{equation} \label{biinvequations}
C^c_{ab}w^b\frac{\partial \BL}{\partial w^c}=0.
\end{equation}
One may think of this as a system of linear partial differential equations in $\BL$, and as such, a functionally independent maximal set, say $\BL_j(w)$, of $r$ solutions generate all the solutions, as their functional combinations: $\phi(\BL_1(w),\ldots,\BL_r(w))$. 

Likewise, a function $\H$ on $\la^*$ is invariant if
\[	C^c_{ab}p_c\frac{\partial \H}{\partial p_b}=0.
	\] 
In \cite{CHE} Chevalley proves that for semisimple Lie algebras of rank $r$, one may find a functionally independent set of invariant functions that consists of $r$ homogeneous polynomials $\H_i$, with degrees equal to the primitive exponents. 

We will now make the link to the previous discussions on Finsler submersions.
Recall that the set of bi-invariant (Finsler) functions on $TG$ is in one-to-one correspondence with the set of $\text{Ad}(G)$-invariant functions  on $\la$, and therefore also with the set of $\text{Ad}^*(G)$-invariant functions on $\la^*$.
We conclude:
\begin{proposition} \label{newchevy}
	Let $G$ be a connected semisimple Lie group with Lie algebra $\mathfrak{g}$ of rank $r$ and let $\H_i$ ($i=1,\ldots,r$) be functionally independent invariant polynomials on $\mathfrak{g}^*$, with corresponding polynomials $\BL_i=\H_i \circ .^\sharp$ on  $\la$. Then, the restriction of any bi-invariant function $\BL$ on $TG$ to the Lie algebra is of the form
	\[
	\BL(w)= \phi(\BL_1(w),\ldots,\BL_r(w)),
	\]
for a smooth function $\phi: \R^r \to \R$.
\end{proposition}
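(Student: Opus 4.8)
The plan is to reduce Proposition~\ref{newchevy} to Chevalley's theorem via the correspondences already established in the excerpt. First I would recall the chain of identifications: a bi-invariant Finsler function $\BL$ on $TG$ is completely determined by its restriction to $T_eG \simeq \la$ (by left-invariance), and this restriction is an $\text{Ad}(G)$-invariant function on $\la$; by Proposition~\ref{dual} it corresponds bijectively to an $\text{Ad}^*(G)$-invariant function $\H$ on $\la^*$, via $\BL = \H\circ .^\flat$ and $\H = \BL\circ .^\sharp$. So it suffices to show that every $\text{Ad}^*(G)$-invariant function $\H$ on $\la^*$ is a smooth function $\psi(\H_1,\ldots,\H_r)$ of Chevalley's generating invariant polynomials $\H_i$, because then $\BL = \H\circ .^\flat = \psi(\H_1\circ .^\flat,\ldots,\H_r\circ .^\flat)$, and using $\BL_i = \H_i\circ .^\sharp$ together with $(.^\flat)^{-1} = .^\sharp$ we get $\H_i\circ .^\flat = \BL_i\circ (.^\sharp)\circ(.^\flat)$... more carefully: $\BL_i(w) = \H_i(w^\flat)$ by definition of $\BL_i$ composed the right way, so $\BL(w) = \H(w^\flat) = \psi(\H_1(w^\flat),\ldots,\H_r(w^\flat)) = \psi(\BL_1(w),\ldots,\BL_r(w))$, which is the claimed form with $\phi = \psi$.

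Next I would address the step "every $\text{Ad}^*(G)$-invariant function on $\la^*$ is a function of the $\H_i$". Chevalley's restriction theorem (cited here as \cite{CHE}) gives this for polynomial invariants: the algebra of $\text{Ad}^*(G)$-invariant polynomials on $\la^*$ is a free polynomial algebra on the $r = \text{rank}(\la)$ generators $\H_i$, whose degrees are the primitive exponents plus one. For a general \emph{smooth} invariant $\H$, I would invoke the smooth version: the map $\Phi = (\H_1,\ldots,\H_r): \la^* \to \R^r$ separates generic coadjoint orbits, and a smooth $\text{Ad}^*(G)$-invariant function descends through $\Phi$ to a smooth function on the image — this is a consequence of Schwarz's theorem on smooth invariants of compact group actions (for $G$ compact, which is the relevant case here since $G$ is connected and semisimple with negative-definite Killing form — though the proposition as stated does not explicitly assume compactness, the examples in Section~\ref{exsec} are compact, e.g.\ $SO(4)$). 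I would note that for the applications intended (e.g.\ producing bi-invariant Finsler functions on $SO(4)$) one can work with $G$ compact, so Schwarz's theorem applies cleanly; alternatively, if one only wants the \emph{form} $\phi(\BL_1,\ldots,\BL_r)$ on the open dense set where the $d\BL_i$ are independent, a direct argument using equation~(\ref{biinvequations}) suffices: the invariance PDE $C^c_{ab}w^b\,\partial\BL/\partial w^c = 0$ says $\BL$ is constant along the leaves of the distribution spanned by the infinitesimal coadjoint-type vector fields, and a maximal functionally independent set of solutions generates all solutions by the standard theory of first-order linear PDE systems.

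The main obstacle I anticipate is the passage from polynomial invariants (where Chevalley's theorem is exactly what is quoted) to arbitrary smooth bi-invariant functions. Chevalley's theorem as literally stated produces polynomial generators, but a bi-invariant Finsler function need not be polynomial — it is merely smooth off the zero section and positively homogeneous of degree one. So one genuinely needs the smooth analogue (Schwarz/Mather for compact group actions, or the PDE argument on the open dense locus). I would therefore structure the proof so that the only "hard" input is cited: state that by Chevalley the $\H_i$ generate the invariant polynomials, then invoke the fact (for $G$ compact semisimple) that every smooth invariant is a smooth function of a fundamental system of polynomial invariants, and finally translate back through Proposition~\ref{dual} and left-invariance. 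A secondary minor point to handle carefully is that the $\H_i$ are given as invariants on $\la^*$ while $\BL$ lives naturally on $\la$; keeping the $\flat$/$\sharp$ bookkeeping straight (and noting $\flat$ and $\sharp$ are mutually inverse linear isomorphisms intertwining $\text{Ad}$ and $\text{Ad}^*$, as proved in Proposition~\ref{dual}) is what makes $\phi = \psi$ fall out with no extra work.
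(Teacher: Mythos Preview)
Your proposal is correct and follows essentially the same route as the paper: the paper's justification (appearing in the paragraphs immediately preceding the proposition rather than in a formal proof block) is precisely the chain ``bi-invariant on $TG$ $\leftrightarrow$ $\text{Ad}(G)$-invariant on $\la$ $\leftrightarrow$ $\text{Ad}^*(G)$-invariant on $\la^*$'' together with Chevalley's theorem and the remark that a maximal functionally independent set of solutions of the PDE~(\ref{biinvequations}) generates all solutions as functional combinations. Your treatment is in fact somewhat more careful than the paper's on the passage from polynomial to smooth invariants --- the paper relies on the general PDE statement without further comment, whereas you explicitly flag the issue and offer Schwarz's theorem (in the compact case) or the open-dense PDE argument as justifications.
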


We now apply the results of the previous sections to the case of the Lie group $SO(4)$. We consider the subgroup $H$  of matrices of the form
\[
\begin{pmatrix}
	A       & 0 \\
	0       & 1
\end{pmatrix}
\]
with $A\in SO(3)$. Then, $H$ isomorphic to  $SO(3)$ and  it is well-known (see \cite{Lee}) that $SO(4)/SO(3) \cong S^3$. Let $\kappa$ be the Killing form and define $\mathfrak{m}$ as the orthogonal complement of $\mathfrak{so}(3)$ with respect to $-\kappa$. Then,  $\mathfrak{g}=\mathfrak{so}(4) =
\mathfrak{h}\oplus \mathfrak{m}$,
where $\mathfrak{h}\cong \mathfrak{so}(3)$ and where the subspace $\mathfrak{m}$ consist of $4\times4$ matrices of the form
\[
\begin{pmatrix}
0_3       & -x \\
x^t       & 0
\end{pmatrix}
\]
with $x$ a column vector in $\R^3$.

The set of matrices, given by
\[
E_1=
\begin{pmatrix}
0       & 0 & 0 & 0\\
0       & 0 & -1 & 0\\
0       & 1 & 0 & 0\\
0       & 0 & 0 & 0
\end{pmatrix},~
E_2=
\begin{pmatrix}
0       & 0 & 1 & 0\\
0       & 0 & 0 & 0\\
-1       & 0 & 0 & 0\\
0       & 0 & 0 & 0
\end{pmatrix},~
E_3=
\begin{pmatrix}
0       & -1 & 0 & 0\\
1       & 0 & 0 & 0\\
0       & 0 & 0 & 0\\
0       & 0 & 0 & 0
\end{pmatrix},~
\]
\\
\[
E_4=
\begin{pmatrix}
0       & 0 & 0 & -1\\
0       & 0 & 0 & 0\\
0       & 0 & 0 & 0\\
1       & 0 & 0 & 0
\end{pmatrix},~
E_5=
\begin{pmatrix}
0       & 0 & 0 & 0\\
0       & 0 & 0 & -1\\
0       & 0 & 0 & 0\\
0       & 1 & 0 & 0
\end{pmatrix},~
E_6=
\begin{pmatrix}
0       & 0 & 0 & 0\\
0       & 0 & 0 & 0\\
0       & 0 & 0 & -1\\
0       & 0 & 1 & 0
\end{pmatrix}
\]
constitute a basis of $\mathfrak{so}(4)$, where $\text{span}\{E_1,E_2,E_3\}\cong \mathfrak{so}(3)$ and $-\kappa(E_a,E_b)=\delta_{ab}$. The above basis is, however, not the standard basis. 
The standard basis respects the decomposition $\mathfrak{so}(4)\cong\mathfrak{so}(3)\oplus \mathfrak{so}(3)$ and is given by
\begin{eqnarray}\label{coordinatechange}
X_1=\frac{1}{2}(E_1+E_4),~X_2=\frac{1}{2}(E_2+E_5)~,~X_3=\frac{1}{2}(E_3+E_6), \\
\nonumber X_4=\frac{1}{2}(E_1-E_4),~X_5=\frac{1}{2}(E_2-E_5),~X_6=\frac{1}{2}(E_3-E_6).
\end{eqnarray}
Here, both $\text{span}\{X_1,X_2,X_3\}\cong \mathfrak{so}(3)$ and $\text{span}\{X_4,X_5,X_6\} \cong \mathfrak{so}(3)$, while ${\mathfrak m}=\text{span}\{E_4,E_5,E_6\}\not\cong \mathfrak{so}(3)$. 
We will use in what follows the  basis $\{E_a\}$ and its corresponding coordinates $w^a$, because in this way $\mathfrak{m}$ is an $\ad$-invariant subspace. The non-zero Lie brackets of the elements of the new basis are
\[
[E_i,E_j] = \epsilon_{ijk}E_k,\qquad [E_i,E_{j+3}] = \epsilon_{ijk}E_{k+3} \qquad\mbox{and}\qquad [E_{i+3},E_{j+3}] = \epsilon_{ijk}E_k.
\]

We know that $SO(3)$ has rank 1. In \cite{patera} it is shown that (if $p_a$ are coordinates w.r.t.\ the dual of the standard basis $\{X_1,X_2,X_3\}$ of ${\mathfrak{so}}(3)$) the function $p_1^2+p_2^2+p_3^2$ is an invariant function on ${\mathfrak{so}}^*(3)$. Since we know that $SO(4)$ has rank 2, and since the polynomials $\H_1=p_1^2+p_2^2+p_3^2$ and $\H_2=p_4^2+p_5^2+p_6^2$ are functionally independent, we know from Proposition~\ref{newchevy} that their corresponding functions $\BL_1 = \H_1 \circ .^\flat$ and $\BL_2 = \H_2 \circ .^\flat$ can be used to determine all other invariant functions as functional combinations of these two. In the current basis these correspond to the functions 
\begin{eqnarray*}
\BL_1(w)&=&(w^1+w^4)^2+(w^1+w^5)^2+(w^3+w^6)^2, \\ \BL_2(w)&=&(w^1-w^4)^2+(w^1-w^5)^2+(w^3-w^6)^2.
\end{eqnarray*}
on $\mathfrak{so}(4)$. One can also check directly that $\BL_1$ and $\BL_2$ do indeed satisfy the PDE (\ref{biinvequations}). 

Among the solutions of (\ref{biinvequations}),  we would like to find those that correspond to Minkowski norms (and therefore to bi-invariant Finsler functions on $TG$), i.e.\ those  that are positive, regular, 1-homogeneous and strongly convex. We claim that e.g.\ the following 3 functions satisfy the definition of a Minkowski norm:
\begin{eqnarray}
\nonumber	\F(w)&=&\sqrt[4]{(\BL_1(w)+\BL_2(w))^2}, \\
\nonumber	\hat\F(w)&=&\sqrt[4]{\BL_1^2(w)+\BL_2^2(w)},\\
\nonumber	\tilde\F(w)&=&\sqrt[4]{(\BL_1(w)+\BL_2(w))^2+\BL_1^2(w)}~=~\sqrt[4]{4\left(~\sum (w^i)^2~\right)^2 +\BL_1^2(w)}.
\end{eqnarray}
The first function, $\F$, is a negative constant multiple of the Killing form, and as such, it is an $\text{Ad}(G)$-invariant Euclidean norm on $\mathfrak{g}$. We show next that the functions $\hat\F$ and $\tilde \F$ are non-Euclidean Minkowski norms on $\mathfrak{g}$. To see that, we only need to prove that e.g.\ $\hat\E=\frac{1}{2}{\hat\F}^2$ is stricly convex (and a similar reasoning will hold for $\tilde \E = \frac{1}{2}\tilde\F^2$). This will follow if we can verify that, for any $x_0,y_0\in \mathfrak{so}(4)$, with $x_0\neq y_0$ and $y_0\neq0$, the function
 $\hat\E=\frac{1}{2}{\hat\F}^2$ is convex along the line $x_0+ty_0$. If we introduce
\[
 \varphi(u_1,u_2)=\frac{1}{2}\sqrt{u_1^2+u_2^2}\qquad\text{and}\qquad 
\psi_i(t)=\BL_i(x_0+ty_0),
\]
then
\[
\Big({\hat\E}(x_0+ty_0)\Big)'' = 	(\psi_i'(t))^T.\Big({\rm Hess\,}\varphi(\psi_i(t))\Big).\psi_i'(t) + (\nabla\varphi(\psi_i(t)))^T.\psi_i''(t).
\]
The first term in this sum is non-negative because the Hessian matrix of $\varphi$ is positive semi-definite (since $\varphi$ is convex).   The gradient of $\varphi$ is the vector $\frac{1}{4\sqrt{u_1^2+u_2^2}}(u_1,u_2)$. When evaluated along  $\psi_i(t)\geq 0$, both components are non-negative. Moreover, the second derivates of $\psi_i$ are strictly positive if $y_0\neq 0$. For this reason, also the second term is positive and therefore the function ${\hat \E}$ is strictly convex.

We conclude that $\hat\F$ and $\tilde\F$ are Minkowski norms on $\mathfrak{so}(4)$. Since they are invariant, we know that they correspond to bi-invariant  Finsler functions $\hat F$ and $\tilde F$ on $SO(4)$. The bi-invariance is sufficient to ensure that  the conditions of  Proposition~\ref{mainn} are satisfied, for $\pi: SO(4) \to SO(4)/SO(3) \cong S^3$. As a consequence they define subduced Finsler functions ${\hat F}_2$ and ${\tilde F}_2$ on $S^3$. 

We end this paragraph with an expression for the corresponding subduced Minkowski norms $({\hat\F}_2)_o$ on $T_o(G/H) \cong {\mathfrak g}/{\mathfrak h}$, which follow from the Minkowski submersion $T_e\pi=\mu$. After taking the isomorphism $\beta$ into account we may identify this space with ${\mathfrak m} \cong \mathfrak{so}(3)$, and the map $\beta\circ\mu$
with the projection to the last three coordinates. The subduced Minkowski norm $({\hat\F}_2)_o$ (regarded as a function on ${\mathfrak m}$) can be characterized as
\begin{equation*}
({\hat\F}_2)_o(w_0^4,w_0^5,w_0^6)~=~\min_{(w^1,w^2,w^3) \in\R^3} \{~ {\hat\F}(w^1,w^2,w^3,w_0^4,w_0^5,w_0^6) ~\},
\end{equation*}
and likewise for $\F$ and $\tilde\F$.

The vectors in $\la$, at which the minimum is attained form the horizontal cone w.r.t.\ the submersion $\mu=T_e\pi$ of Minkowski spaces. For $\F$ the Minkowski submersion is an Euclidean submersion, therefore the horizontal cone is a subspace, orthogonal to $\mathfrak{m}$, which is by construction $\mathfrak{h}$. 

It is interesting that the horizontal cone corresponding to $\hat\F$ also coincides with the subspace $\mathfrak{h}$. This can be seen as follows. For  fixed values of $(w_0^4,w_0^5,w_0^6)$, we can see - after a lengthy calculation - that the function 
\[
\sqrt[4]{((w^1+w_0^4)^2+(w^2+w_0^5)^2+(w^3+w_0^6)^2)^2 +((w^1-w_0^4)^2+(w^2-w_0^5)^2+(w^3-w_0^6)^2)^2}
\] 
admits only one critical point at $(w^1,w^2,w^3) = (0,0,0)$, which is a local minimum. Since the function is convex, it is also a global minimum, whence our claim. In this case the induced nonlinear lift from $\mathfrak m$ to $\mathfrak g$ is simply
\[
(w_0^4,w_0^5,w_0^6) \mapsto (0,0,0,w_0^4,w_0^5,w_0^6).
\]
We conclude that $\hat\F$ is a non-Riemannian Finsler function, whose induced nonlinear splitting is in fact linear. 

On the other hand, the subduced Minkowski norm corresponding to $\tilde\F$ admits a proper (nonlinear) cone in $\mathfrak{so}(4)$. One can check that the horizontal lift of the vector $(w_0^4,w_0^5,w_0^6)=(1,0,0)$ is $(0,0,0,1,0,0)$, but the horizontal lift of $(1,1,0)$ is not simply $(0,0,0,1,1,0)$.


\begin{thebibliography}{99}


\bibitem{paper1} M.M.\ Alexandrino, B.O.\ Alves and M.A.\ Javaloyes,  On singular Finsler foliation,
Annali di Matematica Pura ed Applicata (2019) {\bf 198}, 205--226.
 	

\bibitem{Alv} J.C.\ \'Alvarez Paiva and C.E.\ Dur\'an, Isometric submersions of Finsler manifolds, {\em Proceedings of the American Mathematical Society} \textbf{129} (2001) 2409--2417.

\bibitem{BCS2} D.\ Bao, S.S.\ Chern and Z.\ Shen, {\em An introduction to Riemann-Finsler geometry},  Springer  (2000).


\bibitem{CHE}
C.\ Chevalley, Th\'eorie des groupes de Lie II,
{\em Hermann, Paris} (1951).

\bibitem{CMS} M.\ Crampin, T.\ Mestdag and D.J.\ Saunders, The multiplier approach to the projective Finsler metrizability problem, {\em Differential  Geometry and its Applications} {\bf 30} (2012), 604 -- 621.

\bibitem{deng1} S.\ Deng, {\em Homogeneous Finsler Spaces}, Springer, New York, (2012).



\bibitem{ourthirdarticle} S.\ Hajd\'u and T.\ Mestdag, Nonlinear splittings on fibre bundles,
 {\em Analysis and Mathematical Physics} {\bf 12} (2021), article number 14.

\bibitem{libing} L.\ Huang, Flag curvatures of homogeneous Finsler spaces, {\em European Journal of Mathematics} \textbf{3} (2017) 1000--1029.

\bibitem{latifi} D.\ Latifi and M.\ Toomanian, On the existence of bi-invariant Finsler metrics on Lie groups {\em Mathematical Sciences} \textbf{7} (2013) 1--5.

\bibitem{Lee} J.M.\ Lee, {\em Introduction to smooth manifolds}, Springer (2012).

\bibitem{Inv} T.\ Mestdag and M.\ Crampin, Invariant Lagrangians, mechanical connections and the Lagrange-Poincar\'e equations, {\em Journal of Physics A: Mathematical and Theoretical} \textbf{41} (2008) 344015.

\bibitem{deng2}  X.\ Ming and Z.\ Deng, Normal homogeneous Finsler spaces, {\em Transformation Groups} \textbf{22} (2017) 1143--1183.

\bibitem{falcitelli} A.M.\ Pastore, M.\ Falcitelli and S.\  Ianus, {\em Riemannian submersions and related topics},  World Scientific (2004)

\bibitem{patera} J.\ Patera, R.T.\ Sharp and P.\ Winternitz, Invariants of real low dimension Lie algebras,
{\em Journal of Mathematical Physics} \textbf{17} (1976) 986--994.

\bibitem{sahin} B.\ Sahim, {\em Riemannian submersions, Riemannian maps in Hermitian geometry, and their applications}, Academic Press (2017).

\bibitem{Szilasi} J.\ Szilasi, R.L.\ Lovas and D.\ Kert\'esz, {\em Connections, sprays and Finsler functions}, World Scientific  (2014). 

\bibitem{xu} M.\ Xu, S.\ Deng, L.\ Huang and Z.\ Hu, Even-dimensional homogeneous Finsler spaces with positive flag curvature, {\em Indiana University Mathematics Journal} \textbf{66} (2017) 949--972.

\bibitem{xu2} M.\ Xu and L.\ Zhang, $\delta$-homogeneity in Finsler geometry and the positive curvature problem, {\em Osaka Journal of Mathematics} \textbf{55} (2010) 177--194.

\end{thebibliography}
\end{document}